\newlength{\fighskip} \fighskip=2pt
\newlength{\figvskip} \figvskip=3pt
\numberwithin{equation}{section}
\renewcommand{\epsilon}{\varepsilon}
\newcommand{\R}{\mathbb{R} }
\newcommand{\C}{\mathbb{C} }
\newcommand{\Z}{\mathbb{Z} }
\newcommand{\U}{{\mathcal{U}}}
\newcommand{\pp}[2]{\frac{\partial #1}{\partial #2}}  
\newcommand{\LR}[1]{\left ( #1 \right )}
\newcommand{\Lr}[1]{\left [ #1 \right ]}
\newcommand{\lr}[1]{\left\{ #1\right \}}
\newcommand{\lR}[1]{\left < #1\right >}
\newcommand{\ali}[1]{$$\begin{aligned} #1 \end{aligned}$$}
\newcommand{\inv}{^{-1}}
\newcommand{\lra}{ \longrightarrow}
\newcommand{\lmt}{ \longmapsto}
\newcommand{\zz}{{\mathbf{z}}}
\newcommand{\ww}{{\mathbf{w}}}
\newcommand{\xx}{{\mathbf{x}}}
\newcommand{\xxi}{\boldsymbol \xi}
\newcommand{\OO}{{\mathcal O}}
\newcommand{\DD}{{\mathbb D}}
\renewcommand{\emptyset}{\varnothing}
\newcommand{\bp}{{\Bar{\partial}}}
\def\XXint#1#2#3{{\setbox0=\hbox{$#1{#2#3}{\int}$}
		\vcenter{\hbox{$#2#3$}}\kern-.5\wd0}}
\DeclareMathOperator{\Conf}{Conf}
\newcommand{\Hol}{\OO^{hol}}
\newcommand{\HolX}[1]{{\OO^{hol}_{#1}}} 
\DeclareMathOperator{\Id}{Id}
\renewcommand{\Im}{\mathrm{Im} \;\!}
\DeclareMathOperator{\Ker}{Ker}
\theoremstyle{plain}
\newtheorem{thm}{Theorem}[section]
\newtheorem{lem}[thm]{Lemma}
\newtheorem{prop}[thm]{Proposition}
\newtheorem{cor}[thm]{Corollary}
\theoremstyle{definition}
\newtheorem{defn}[thm]{Definition}
\theoremstyle{remark}
\newtheorem{rem}[thm]{Remark}
\title{\textbf On the Dolbeault Cohomology of Configuration Spaces}
\author{Peng Yang}
  \address{
P.~ Yang: Department of Mathematical Sciences, Tsinghua University, Beijing, China
}
\email{yang-p20@mails.tsinghua.edu.cn}
\begin{document}
\begin{abstract}  
We compute the Dolbeault cohomology ring of the configuration spaces of $\mathbb{C}^n$ and construct a spectral sequence that converges to the Dolbeault cohomology ring of the configuration spaces of an arbitrary complex manifold.
\end{abstract}

\maketitle
  
\tableofcontents


\section{Introduction}
Let $\Conf_m(\R^n)$ be the configuration space of $m$ distinct ordered points in $\R^n$. Via the Gauss map 
\ali{
\gamma_{ab}:\qquad \Conf_m(\R^n)         &\lra S^{n-1}\\ 
  (\xx^1,\cdots,\xx^m) &\lmt    \frac{\xx_b-\xx_a}{||\xx_b-\xx_a||}
}
the volume form on $S^{n-1}$ pulls back to a degree $n-1$ form $\theta_{ab}$ on $\Conf_m(\R^n)$. As is well known, the cohomology ring $H^\bullet(\Conf_m(\R^n))$ is freely generated by the classes $\theta_{ab}$, subject to the relations $\theta_{ab}=(-1)^n \theta_{ba}$, $\theta_{ab}^2=0$,  and the Arnold relation $\theta_{ab}\theta_{bc}+\theta_{bc}\theta_{ca}+\theta_{ca}\theta_{ab}=0$. For a review, see \cite{cohomology-conf}.
 
Let $M$ be an oriented manifold of dimension $n$. To compute $H^\bullet(\Conf_m(M))$, Totaro \cite{cohomology-conf-mfd} studied the Leray spectral sequence associated to the inclusion $\Conf_m(M) \hookrightarrow M^m$. The spectral sequence converges to $H^\bullet(\Conf_m(M))$, and its $E_2$ page, as a bigraded algebra, is the quotient of $H^\bullet(M^m)[\theta_{ab}]$ by the above three relations and an additional relation $p_a^*(\tau) \theta_{ab} = p_b^*(\tau) \theta_{ab}$ for $\tau\in H^{0,\bullet}_\bp(M)$, where $p_a$ is the $a$-th projection $M^m\to M$. 
The differential is given by
$$
d\theta_{ab}=p_{ab}^*\Theta 
$$
where $p_{ab}$ is the projection $M^n\to M^2$ onto the $a$-th and the $b$-th coordinates and $\Theta $ is the class of the diagonal in $H^{n}(M^2)$. When $M$ is the underlying space of a smooth projective variety over $\C$, the spectral sequence degenerates after the first nontrivial differential.

In this paper, we study Dolbeault cohomology rings of  configuration spaces of complex manifolds. 
The Dolbeault cohomology of configuration spaces is not only of intrinsic mathematical interest but also a window into the algebraic structure of holomorphic field theories, known as holomorphic factorization algebras \cite{kevin-owen}. 
Our explicit description of $H^{0,\bullet}_\bp(\Conf_m(\C^n))$ in Theorem \ref{Dol-ring-structure-intro} can be interpreted as giving a complete characterization of the local operators in a free holomorphic field theory.  

Our first step is the computation of  
$H^{0,\bullet}_{\bp}(\C^n-0)$. 
There exists a  class $\omega$ of degree $(0,n-1)$ that  represents the inverse of $\bp$ applied to the Dirac delta distribution at $0$; hence, $\omega$ is $\bp$-closed. Moreover, since 
$\partial_i:=\frac{\partial}{\partial z_i}$ commutes with $\bp$, applying $\partial_i$ any number of times to $\omega$ still yields a $\bp$-closed form. These forms constitute  a Schauder basis for $H^{0,n-1}_{\bp}(\C^n-0)$, and we have
\begin{prop}[See Proposition \ref{Dolbeault-punctured-disk}]
$$
H^{0,\bullet}_{\bp}(\C^n-0)=
\begin{cases}
\Hol(\C^n) & \bullet=0 \\
\overline{\C[\partial_1,\cdots,\partial_n]\omega}   & \bullet=n-1 \\
0             & \text{else}
\end{cases}
$$
\end{prop}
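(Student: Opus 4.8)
The plan is to identify the Dolbeault groups with coherent sheaf cohomology and to compute the latter through a minimal \v{C}ech complex built from Laurent series. By the Dolbeault theorem, in its topological form, $H^{0,q}_\bp(\C^n-0)\cong H^q(\C^n-0,\OO)$ as topological vector spaces. I would cover $\C^n-0$ by the $n$ open sets $U_i=\{z_i\neq0\}$; each finite intersection $U_I=\bigcap_{i\in I}U_i$ is biholomorphic to $(\C^*)^{|I|}\times\C^{n-|I|}$, hence Stein, so by Leray's theorem (via Cartan's Theorem B) the \v{C}ech complex $C^\bullet$ with $C^p=\bigoplus_{|I|=p+1}\OO(U_I)$ computes $H^\bullet(\C^n-0,\OO)$. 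As $C^\bullet$ has length $n$, the groups in degrees $\geq n$ vanish automatically, settling $H^{0,n}_\bp$.

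Next I would make $C^\bullet$ fully explicit. One-variable Laurent expansion gives a topological splitting $\OO(\C^*)=P\oplus N$, with $P=\OO(\C)$ and $N=\overline{\Span\{z^{-1},z^{-2},\dots\}}$; since $\OO(U_I)$ is the completed tensor product of a copy of $\OO(\C^*)$ for each $i\in I$ and a copy of $\OO(\C)$ for each $i\notin I$, distributing $P\oplus N$ over the factors with $i\in I$ yields $\OO(U_I)=\bigoplus_{J\subseteq I}V_J$, where $V_J$ carries the factor $N$ in slot $i$ for $i\in J$ and $P$ in slot $i$ for $i\notin J$, and depends only on $J\subseteq\{1,\dots,n\}$. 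The \v{C}ech differentials are the restriction maps, which are precisely the inclusions respecting this $J$-grading, so $C^\bullet=\bigoplus_J C^\bullet_J$. For fixed $J$, the complex $C^\bullet_J$ is $V_J$ tensored with the \v{C}ech cochain complex on $\{I:J\subseteq I\}$, which — reindexing by $I\setminus J$ — is the cochain complex on all subsets of $\{1,\dots,n\}\setminus J$, i.e. $(\C\xrightarrow{\mathrm{id}}\C)^{\otimes(n-|J|)}$, with its bottom (empty-subset) term present exactly when $J\neq\emptyset$. Hence for $\emptyset\neq J\neq\{1,\dots,n\}$ this factor is acyclic and $C^\bullet_J$ contributes nothing; for $J=\emptyset$ (where the empty subset is absent) the factor contributes $\C$ in \v{C}ech-degree $0$; and for $J=\{1,\dots,n\}$ it contributes $\C$ in \v{C}ech-degree $n-1$. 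Since tensoring a fixed finite-dimensional complex with a nuclear Fr\'echet space commutes with taking (Hausdorff) cohomology, summing over $J$ gives $H^0=V_\emptyset=\OO(\C^n)=\Hol(\C^n)$ (re-proving Hartogs extension across $0$), $H^{n-1}=V_{\{1,\dots,n\}}=\widehat{\bigotimes}_{i=1}^n N$, and $H^q=0$ otherwise; the last group has the Schauder basis $\{z^{-\alpha-\mathbf{1}}:=\prod_i z_i^{-\alpha_i-1}\}_{\alpha\in\Z_{\geq0}^n}$.

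It then remains to match $H^{n-1}=\widehat{\bigotimes}_{i=1}^n N$, with this monomial basis, against $\overline{\C[\partial_1,\dots,\partial_n]\,\omega}$. Each $\partial_i=\partial/\partial z_i$ is a sheaf endomorphism of $\OO$ that commutes with $\bp$ and with all restriction maps, hence acts compatibly on the \v{C}ech and Dolbeault complexes and intertwines the comparison isomorphism; since $\partial^\alpha(z_1^{-1}\cdots z_n^{-1})=(-1)^{|\alpha|}\alpha!\,z^{-\alpha-\mathbf{1}}$, it is enough to show that the class of the top \v{C}ech cocycle $z_1^{-1}\cdots z_n^{-1}$ corresponds to a nonzero multiple of $[\omega]$. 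The direct way is to run the \v{C}ech-to-Dolbeault zig-zag with the partition of unity $\rho_i=|z_i|^2/|z|^2$ subordinate to $\{U_i\}$: iterating $\bp$ against this data on the top cocycle reproduces a constant multiple of the Bochner--Martinelli form, which by its defining property $\bp\omega=\delta_0$ (as currents on $\C^n$) is $\omega$ itself. Alternatively one argues softly: the operation "extend a representative to a current on $\C^n$ and apply $\bp$" is well defined on $H^{0,n-1}_\bp(\C^n-0)$, is injective (if its value on a class is $\bp$ of a current supported at $0$, that class is $\bp$-exact on $\C^n-0$ because $H^{0,n-1}_\bp(\C^n)=0$ for $n\geq2$), and is equivariant for the coordinate torus; it sends $[\omega]$ to $\delta_0\neq0$, so $[\omega]$ is nonzero and, matching torus weights with those of the monomial basis, is a nonzero multiple of $[z_1^{-1}\cdots z_n^{-1}]$. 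Either way, the comparison isomorphism carries the monomial basis of $\widehat{\bigotimes}_{i=1}^n N$ to nonzero multiples of $\{\partial^\alpha\omega\}_{\alpha\in\Z_{\geq0}^n}$, so these form a Schauder basis of $H^{0,n-1}_\bp(\C^n-0)$ and their closed span is $\overline{\C[\partial_1,\dots,\partial_n]\,\omega}$, which proves the proposition for $n\geq2$ (for $n=1$, $\C-0=\C^*$ and the claim is classical).

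The \emph{main obstacle} is the functional-analytic bookkeeping in the second step: one must check that $C^\bullet=\bigoplus_J C^\bullet_J$ is a topological direct-sum decomposition into complexes of nuclear Fr\'echet spaces, that tensoring an acyclic finite-dimensional complex with $V_J$ is topologically acyclic with closed coboundaries — which holds because one may tensor $\mathrm{id}_{V_J}$ with a contracting homotopy of the finite-dimensional factor — and hence that the cohomology is Hausdorff and carries exactly the Fr\'echet topology implicit in the closure $\overline{(\,\cdot\,)}$ of the statement; otherwise the claimed completion is not pinned down. (The explicit identification of the top \v{C}ech class with $\omega$, while classical, also requires an honest computation.) A more conceptual route to the same conclusion replaces the \v{C}ech cover by the biholomorphism of $\C^n-0$ with the complement of the zero section in the total space of $\OO_{\PP^{n-1}}(-1)$, and uses the Leray sequence of the resulting $\C^*$-bundle over $\PP^{n-1}$ together with Bott's formula for $H^\bullet(\PP^{n-1},\OO(k))$; this yields the same answer, now graded by the weight $k$ of the $\C^*$-action.
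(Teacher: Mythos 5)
Your proposal is correct and, at its core, runs along the same lines as the paper's proof: the coordinate cover $U_i=\{z_i\neq0\}$ of $\C^n-0$ with Stein intersections, the Laurent splitting $\Hol(\C-0)=\Hol(\C)\oplus N$ with $N$ the closed span of negative powers, and the identification of the top \v{C}ech class $z_1^{-1}\cdots z_n^{-1}$ (the Cauchy kernel) with the Bochner--Martinelli class $[\omega]$ via the \v{C}ech-to-Dolbeault zig-zag built from the ``partition of unity'' $\sum_i |z_i|^2/|\zz|^2$ --- exactly the chain of forms $\omega_p,\alpha_p$ written out explicitly in the paper --- together with the observation that $\partial_i$ commutes with the comparison, so the monomial basis goes over to $\{\partial^\alpha\omega\}$. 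The genuine differences are modest: where the paper quotes Hartogs' extension for degree $0$ and the second Riemann continuation theorem for $0<\bullet<n-1$ and only runs the \v{C}ech computation in top degree, you decompose the entire \v{C}ech complex by the index set $J$ recording which slots carry the principal-part factor $N$, so that the vanishing in intermediate degrees (and Hartogs itself) drop out of one uniform computation; this is more self-contained, at the price of the functional-analytic bookkeeping you rightly flag, which your contracting-homotopy argument does handle since each $J$-factor of the combinatorial complex is finite-dimensional. Your alternative ``soft'' identification of $[\omega]$ (extend to a current on $\C^n$, use $\bp\omega=\delta_0$ and torus weights) is not in the paper and as stated needs care --- an arbitrary $\bp$-closed representative on $\C^n-0$ need not extend to a current near $0$, so it should be phrased on the weight-homogeneous representatives you already produced --- but the explicit zig-zag route you give first is the paper's own and suffices.
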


Second, we compute $H^{0,\bullet}_\bp (\Conf_m(\C^n))$. Let $\pi: \Conf_m(\C^n) \to \Conf_{m-1}(\C^n)$ be the projection onto the first $m-1$ coordinates. It is not a holomorphic fiber bundle because the fibers are generally not biholomorphic; however, the Leray spectral sequence still applies. 
Let $\phi_{ab}$ be the map
\ali{
\phi_{ab}: \Conf_m(\C^n) &\lra \C^n-0 \\
  (\zz^1,\cdots,\zz^m)       &\lmt \zz^a-\zz^b
}
and denote $\omega_{ab}=\phi_{ab}^*\omega$.  
The Leray spectral sequence shows $H^{0,\bullet}_\bp (\Conf_m(\C^n))$ is generated by holomorphic functions and holomorphic derivatives of $\omega_{ab}$ with relations analogous to those in \cite{cohomology-conf-mfd}. More precisely, we have

\begin{thm}[See Theorem \ref{Dol-ring-structure-v2} and Corollary \ref{Dol-ring-structure-C}]\label{Dol-ring-structure-intro}
The graded commutative topological ring $H^{0,\bullet}_\bp(\Conf_m(\C^n))$ is generated by holomorphic functions and holomorphic derivatives of $\omega_{ab}$, subject to the following relations 
\begin{itemize}
\item $\omega_{ab}=(-1)^n \omega_{ba}$
\item $f(\vec \partial_b)\omega_{ab}  g(\vec \partial_b)\omega_{ab}=0$, for $n\ge2$
\item (holomorphic derivatives of) $\omega_{ab}\omega_{bc}+\omega_{bc}\omega_{ca}+\omega_{ca}\omega_{ab}=0$
\item $(f(\zz^a)-f(\zz^b))g(\vec \partial_b)\omega_{ab}=0$
\end{itemize} 
For $n=1$, $H^{0,\bullet}_\bp (\Conf_m(\C)) = \Hol(\Conf_m(\C))$, and the cohomology vanishes in positive degree
\end{thm}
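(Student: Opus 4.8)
The plan is to identify $H^{0,\bullet}_{\bp}$ with the sheaf cohomology $H^{\bullet}(-,\OO)$ of the structure sheaf — the Dolbeault complex being a fine resolution of $\OO$ — and then to compute by induction on $m$, adding one point at a time. I would dispose of $n=1$ first and separately: $\Conf_m(\C)$ is the complement in $\C^m$ of the analytic hypersurface $\bigcup_{i<j}\{z_i=z_j\}$, hence Stein, so $H^q(\Conf_m(\C),\OO)=0$ for $q>0$ while $H^0=\Hol(\Conf_m(\C))$, which is exactly the $n=1$ assertion. From now on $n\ge2$, and the induction on $m$ starts from the trivial cases $m=0,1$ (where $\Conf_m(\C^n)$ is a point or $\C^n$).

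For the inductive step I would use the open embedding $j\colon\Conf_m(\C^n)\hookrightarrow Y:=\Conf_{m-1}(\C^n)\times\C^n$ sending $(\zz^1,\dots,\zz^m)$ to $((\zz^1,\dots,\zz^{m-1}),\zz^m)$, whose closed complement is the \emph{disjoint} union $\bigsqcup_{i=1}^{m-1}Z_i$ with $Z_i:=\{\zz^m=\zz^i\}$ a closed complex submanifold of codimension $n$ biholomorphic to $\Conf_{m-1}(\C^n)$ — disjointness being the reason to project onto the first $m-1$ points rather than to work inside $(\C^n)^m$. Since $\operatorname{codim}Z_i=n\ge2$, Hartogs gives $j_*\OO=\OO_Y$, and purity of the local cohomology of $\OO$ along a smooth submanifold gives $R^qj_*\OO=0$ for $0<q<n-1$ and $R^{n-1}j_*\OO=\bigoplus_{i}\iota_{i*}\,\mathcal H^n_{Z_i}(\OO)$, where $\iota_i\colon Z_i\hookrightarrow Y$ and $\mathcal H^n_{Z_i}(\OO)$ is the local cohomology sheaf: after the linear change of variables $u=\zz^m-\zz^i$ the pair $(Y,Z_i)$ becomes a product $(\Conf_{m-1}(\C^n)\times\C^n_u,\ \Conf_{m-1}(\C^n)\times\{0\})$, and $\mathcal H^n_{Z_i}(\OO)$ is identified with $\OO_{Z_i}\hotimes_\C \overline{\C[\partial_1,\dots,\partial_n]}$ — the span of holomorphic normal derivatives of the $\delta$-current along $Z_i$ — on which the functions $u_k$ vanishing on $Z_i$ act by the contragredient lowering rule. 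These classes are represented globally on $\Conf_m(\C^n)$ by the $\bar\partial$-closed forms $g(\vec\partial_{\zz^m})\,\omega_{im}=g(\vec\partial_{\zz^m})\,\phi_{im}^{*}\omega$, $1\le i\le m-1$; this is the source of the generators ``holomorphic derivatives of $\omega_{ab}$'' and, via the module structure above, of the fourth relation. Feeding in the inductive hypothesis together with the K\"unneth formula (valid since $\C^n$ is Stein) identifies the two nonzero rows of the Leray $E_2$-page of $j$ as $H^{\bullet}(\Conf_{m-1}(\C^n),\OO)\hotimes\OO(\C^n)$ in row $0$ and $\bigoplus_i H^{\bullet}(\Conf_{m-1}(\C^n),\OO)\hotimes\overline{\C[\partial_1,\dots,\partial_n]}$ in row $n-1$.

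It remains to see that the spectral sequence degenerates without extension problems and that the listed relations hold. Because only rows $0$ and $n-1$ are nonzero, the sole possibly-nonzero differential is $d_n\colon E_n^{p,n-1}\to E_n^{p+n,0}$, cup product with (normal derivatives of) the cycle class of $Z_i$ in $H^n(Y,\OO)$; but in the coordinates $(\zz^1,\dots,\zz^{m-1},u)$ this class is pulled back from the cycle class of $\{0\}\in\C^n_u$ in $H^n(\C^n_u,\OO)=0$, hence vanishes. Equivalently — and this is the cleanest way to obtain both degeneration and triviality of the extension at once — the forms $\omega_{ab}$, their holomorphic derivatives, and (by the inductive hypothesis) products of such forms are honest global $\bar\partial$-closed forms realizing all of $E_2^{p,n-1}$, so the edge map splits \emph{multiplicatively}. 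The three remaining families of relations are then checked on representatives: $\omega_{ab}=(-1)^n\omega_{ba}$ because the antipodal map $z\mapsto-z$ of $\C^n-0$ carries $\omega$ to $(-1)^n\omega$; $f(\vec\partial_b)\omega_{ab}\cdot g(\vec\partial_b)\omega_{ab}=\phi_{ab}^{*}\big(f(\vec\partial)\omega\cdot g(\vec\partial)\omega\big)=0$ for $n\ge2$, since $f(\vec\partial)\omega\cdot g(\vec\partial)\omega$ lies in $H^{0,2(n-1)}_{\bp}(\C^n-0)=0$ by Proposition~\ref{Dolbeault-punctured-disk}; and the Arnold relation (together with its holomorphic derivatives, obtained by applying $g(\vec\partial)$, which commutes with $\bar\partial$) follows exactly as in Arnold's original argument from an explicit $\bar\partial$-primitive of $\omega_{ab}\omega_{bc}+\omega_{bc}\omega_{ca}+\omega_{ca}\omega_{ab}$ on $\Conf_3(\C^n)$. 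Assembling the inductive step and unwinding the iterated description into a single uniform presentation then gives the theorem.

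The hard part is the \emph{completeness} of the relations: showing that the abstract ring presented by these generators and relations maps \emph{isomorphically}, not merely surjectively, onto $H^{0,\bullet}_{\bp}(\Conf_m(\C^n))$. I expect this to require an Arnold-type normal form for the presentation ring — writing each element as a sum of terms (holomorphic function)$\,\cdot\,(\prod$ of $\omega_{ab}$ in a fixed nested order$)\,\cdot\,(g(\vec\partial)$ acting on the innermost factor$)$ — and then checking term by term that these map to a topological basis of the $E_\infty$-page produced by the iterated spectral sequence. Two features absent from Totaro's topological setting need genuine care: the fibers have infinite-dimensional cohomology, so the topological (Schauder-basis) completions must be carried through the K\"unneth isomorphism and through the filtered colimit of coherent subsheaves used to compute $H^{\bullet}(\Conf_{m-1}(\C^n),\mathcal H^n_{Z_i}(\OO))$; and the relevant $\OO$-module structure on the span of holomorphic derivatives of $\omega_{ab}$ is the contragredient one, so the bookkeeping of the normal form and of the fourth relation must be arranged to respect it. Everything else — Hartogs, the local-cohomology identification, the vanishing of $d_n$, and the degree/antipodal arguments for the symmetry, product, and Arnold relations — is routine or already available.
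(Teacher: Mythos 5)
Your overall architecture coincides with the paper's: induct on the number of points by comparing $\Conf_m(\C^n)$ with $\Conf_{m-1}(\C^n)$, identify the higher direct image with holomorphic derivatives of the Bochner--Martinelli class along the (disjoint) diagonals, and get degeneration because the classes $g(\vec\partial_b)\omega_{ab}$ are honest global $\bp$-closed forms. The paper applies Leray to the forgetful projection $\pi$ and computes the pushforward on convex product neighborhoods by an explicit \v{C}ech--Dolbeault argument (Lemma \ref{open-puncture}), where you invoke purity/local cohomology for the open inclusion into $\Conf_{m-1}(\C^n)\times\C^n$ plus K\"unneth; that difference is largely packaging. Your $n=1$ argument (Steinness of the hypersurface complement) is a clean alternative to the paper's observation that $\omega_{ab}$ has bidegree $(0,0)$ when $n=1$.

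The genuine gaps are exactly the places where the paper does its real work. (1) Completeness of the relations -- injectivity of the presented ring onto $H^{0,\bullet}_\bp$ -- is explicitly deferred in your write-up; in the paper this rests on Theorem \ref{Dolbeault-group} (the additive description with its explicit Schauder basis, extracted from Lemma \ref{open-puncture} and the collapse of the Leray sequence) together with the residue map of Section 3.2, which detects classes and certifies which elements are $\bp$-exact. Your proposal contains no substitute for this residue calculus, and without it the normal-form comparison you ``expect'' to work is not a proof. (2) The Arnold relation for $n\ge2$ is not an algebraic identity as in Arnold's $n=1$ case; you only assert that a $\bp$-primitive exists. The paper produces one by applying $\bp$ to the fiber integral $\int_{\zz^4}dz^4_1\cdots dz^4_n\,\omega_{14}\omega_{24}\omega_{34}$ over an auxiliary fourth point (Proposition \ref{Arnold-relation}); some such argument is needed and is missing from your sketch. (3) Your justification of the fourth relation ``via the module structure'' is in tension with the very lowering rule you cite: multiplication by a function vanishing on the diagonal lowers derivative classes rather than annihilating them -- for instance $(z^a_1-z^b_1)\,\partial_{z^b_1}\omega_{ab}$ is cohomologous to $\pm\,\omega_{ab}$, not to zero -- so for non-constant $g$ the relation only holds after the corresponding lower-order corrections (equivalently, as holomorphic derivatives applied to the whole expression $(f(\zz^a)-f(\zz^b))\omega_{ab}$), and verifying which elements actually die requires the residue computation. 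This is precisely the bookkeeping you postponed, so as it stands it is a gap rather than a routine check.
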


Finally, For a   complex manifold $X$, there is a spectral sequence which converges to  $H^{\bullet,\bullet}_\bp(\Conf_m(X))$. 
\begin{thm}[See Theorem \ref{conf-mfd-cohomology}]
Let $X$ be a complex manifold with dimension $n\ge2$. The inclusion $\Conf_m(X)\hookrightarrow X^m$ determines a Leray spectral sequence which converges to $H^{\bullet,\bullet}_\bp(\Conf_m(X))$ as a graded commutative topological ring. The $E_2$ term is a sheaf of rings with underlying group
$$
\oplus_{k=0}^{m-1}\oplus_{\substack{1< b_1<\cdots<b_{k}\le m\\ a_1<b_1,\cdots,a_k<b_k}} 
H^\bullet \LR{X^m, \Omega^{\bullet,0}_{X^m} \otimes_{\HolX{X^m}} G_{a_1b_1} \otimes_{\HolX{X^m}} \cdots \otimes_{\HolX{X^m}} G_{a_kb_k}}.
$$   
Here the sheaf $G_{ij}$, supported on the diagonal $\Delta_{ij}$, is constructed by applying holomorphic differential operators to the inverse canonical sheaf of $\Delta_{ij}$.  
The differential $d_n$ is the  Gysin map.  
\end{thm}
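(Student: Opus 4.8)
The plan is to identify the asserted spectral sequence with the Grothendieck spectral sequence of the open immersion $j\colon\Conf_m(X)\hookrightarrow X^m$ applied to the Dolbeault complex, and then to reduce the $E_2$ page to the local analysis already carried out over $\C^n$. Since $j$ is an open immersion, $\Omega^p_{\Conf_m(X)}=j^*\Omega^p_{X^m}$ and $R\Gamma(\Conf_m(X),-)=R\Gamma(X^m,-)\circ Rj_*$, so by Dolbeault's theorem there is, for each $p$, a convergent first-quadrant spectral sequence
\[
E_2^{s,t}=H^s\!\LR{X^m,\ \Omega^{p,0}_{X^m}\otimes_{\HolX{X^m}}R^t j_*\OO_{\Conf_m(X)}}\ \Longrightarrow\ H^{p,s+t}_\bp(\Conf_m(X)),
\]
the locally free sheaf $\Omega^{p,0}_{X^m}$ having been extracted from $R^tj_*$; taking the direct sum over $p$ produces the sheaf $\Omega^{\bullet,0}_{X^m}$ appearing in the statement. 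This Leray spectral sequence is multiplicative --- $Rj_*$ of the sheaf of Dolbeault dg-algebras is again a sheaf of dg-algebras and $R\Gamma$ is lax monoidal --- so its abutment carries the ring structure of $H^{\bullet,\bullet}_\bp(\Conf_m(X))$ and $E_2$ is a sheaf of bigraded-commutative rings; every group is to be given its natural topology as a topological vector space and every structure map (products, differentials, edge maps) is continuous, so the statement lives in topological rings. Because $n\ge 2$, the complement of $\Conf_m(X)$ in $X^m$ has codimension $\ge 2$, so Hartogs gives $R^0j_*\OO=\OO_{X^m}$ and the bottom row $t=0$ is $H^{p,s}_\bp(X^m)$, which is the $k=0$ summand.

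It remains to compute the higher direct images $R^\bullet j_*\OO_{\Conf_m(X)}$, i.e.\ the cohomology sheaves of $Rj_*$ applied to the Dolbeault complex. This sheaf is the sheafification of $V\mapsto H^{0,\bullet}_\bp(V\cap\Conf_m(X))$ and is therefore purely local on $X^m$: around a point $x\in X^m$ one chooses holomorphic coordinates identifying a polydisc neighbourhood with one in $\C^{nm}$ in which the diagonals through $x$ become the linear subspaces $\{z^a=z^b\}$ of the braid arrangement (the remaining diagonals not meeting the polydisc), so the stalk is governed by the local content of Proposition~\ref{Dolbeault-punctured-disk} and Theorem~\ref{Dol-ring-structure-v2}. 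Concretely, I would run the local-cohomology triangle $R\Gamma_{\bigcup_{a<b}\Delta_{ab}}\OO_{X^m}\to\OO_{X^m}\to Rj_*\OO_{\Conf_m(X)}\xrightarrow{+1}$ together with the Mayer--Vietoris (nerve) complex expressing $R\Gamma_{\bigcup\Delta_{ab}}\OO_{X^m}$ in terms of the $R\Gamma_{\Delta_G}\OO_{X^m}$, where $G$ runs over subgraphs of the complete graph on $\{1,\dots,m\}$ and $\Delta_G$ is the partition diagonal of $G$. Each $\Delta_G$ is smooth of codimension $n$ times the number of merges of $G$, so $R\Gamma_{\Delta_G}\OO_{X^m}$ is, up to shift, a single local-cohomology sheaf $\mathscr H^{\,c}_{\Delta_G}(\OO_{X^m})$ ($c$ its codimension), supported on $\Delta_G$ and freely generated over $\OO_{\Delta_G}$ by the normal holomorphic derivatives of a generator of its inverse canonical sheaf; the one-edge case is $G_{ij}=\mathscr H^n_{\Delta_{ij}}(\OO_{X^m})$, with $\omega_{ab}$ as its canonical local generator. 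Unwinding the nerve complex and choosing a normal form modulo the Arnold relations (as in the proof of Theorem~\ref{Dol-ring-structure-v2}) then yields $R^tj_*\OO=0$ unless $(n-1)\mid t$ and identifies $R^{k(n-1)}j_*\OO_{\Conf_m(X)}$, with $k=t/(n-1)$, with the sheaf of rings $\bigoplus G_{a_1b_1}\otimes_{\HolX{X^m}}\cdots\otimes_{\HolX{X^m}}G_{a_kb_k}$ of the statement --- here $G_{ij}^2=0$, the triples $G_{ij},G_{jk},G_{ik}$ satisfy Arnold, and multiplication by $p_i^*f$ and $p_j^*f$ agree on $G_{ij}$ since it is supported where $p_i=p_j$. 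Canonicity of the normal form makes this decomposition coordinate-independent, so it glues to a global isomorphism on $X^m$; substituting into $E_2^{s,t}$ and re-indexing gives the stated $E_2$ page and its ring structure.

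For the differential: since $R^tj_*\OO$ vanishes for $0<t<n-1$, every $d_r$ with $2\le r\le n-1$ is forced to vanish, so $E_2=\cdots=E_n$ and the first possibly nonzero differential is $d_n\colon E_n^{s,t}\to E_n^{s+n,t-n+1}$. On the summand indexed by $(a_1,b_1),\dots,(a_k,b_k)$ it lowers $k$ by one: for each $i$ in turn (with signs) it collapses the factor $G_{a_ib_i}=\mathscr H^n_{\Delta_{a_ib_i}}(\OO_{X^m})$ through the Grothendieck residue $R\Gamma_{\Delta_{a_ib_i}}\OO_{X^m}[n]\to\OO_{X^m}$, which upon passing to $X^m$-cohomology is precisely the Gysin morphism of the closed immersion $\Delta_{a_ib_i}\hookrightarrow X^m$; on the lowest piece, for $k=1$, this reads $d_n\omega_{ab}=[\Delta_{ab}]$, the Dolbeault fundamental class of the diagonal, in parallel with Totaro's $d\theta_{ab}=p_{ab}^*\Theta$. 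The identification of $d_n$ with this residue/Gysin map is exactly the defining property of $\omega$ in Proposition~\ref{Dolbeault-punctured-disk} --- that $\bp\omega$ is the Dirac current at the origin --- transported along the coordinate identification and extended multiplicatively. Convergence of the first-quadrant spectral sequence then completes the proof.

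The main obstacle is the local computation of $R^\bullet j_*\OO$. The diagonals $\Delta_{ab}$ are \emph{not} in general position: they obey the braid incidences $\Delta_{ab}\cap\Delta_{bc}\subseteq\Delta_{ac}$, so the nerve complex of the local-cohomology Mayer--Vietoris is genuinely nontrivial and is exactly what forces the Arnold relations; extracting from it the canonical forest-indexed decomposition above requires the full normal-form analysis underlying Theorem~\ref{Dol-ring-structure-v2}. A secondary, more bookkeeping, difficulty is to verify that this decomposition is coordinate-independent (hence globalizes) and that the products, the residue/Gysin differential, and the edge maps are all continuous for the natural topologies, so that the conclusion is genuinely one of topological rings.
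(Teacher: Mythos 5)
Your proposal has the same skeleton as the paper's argument: the Leray spectral sequence of the open inclusion $\Conf_m(X)\hookrightarrow X^m$ applied to $\Omega^{p,0}$, multiplicativity, the projection-formula twist $R^\bullet f_*\Omega^{\bullet,0}_{\Conf_m(X)}\cong\Omega^{\bullet,0}_{X^m}\otimes_{\HolX{X^m}}R^\bullet f_*\HolX{\Conf_m(X)}$, the identification of the higher direct images with direct sums of $\HolX{X^m}$-tensor products of diagonal sheaves $G_{ij}$, the vanishing of $d_2,\dots,d_{n-1}$ for degree reasons, and $d_n$ as a Gysin-type map. Where you genuinely differ is the mechanism for computing and globalizing $R^\bullet f_*\HolX{\Conf_m(X)}$: the paper does not run a local-cohomology/nerve computation over the diagonal arrangement, but instead identifies the higher direct images in local coordinates with those already computed for $\Conf_m(\C^n)$ (Theorem \ref{Dolbeault-group}, obtained from the fibration $\pi:\Conf_m(\C^n)\to\Conf_{m-1}(\C^n)$ and \v{C}ech--Dolbeault arguments), and then globalizes the generator via Proposition \ref{BM-coor-change}: for $n\ge2$ the Bochner--Martinelli class is coordinate-independent up to $\bp$-exact terms, so $\omega_{BM}(\zz^1-\zz^2)$ spans a constant sheaf isomorphic to $\Delta_*\C$ and the class of $\omega_{12}$ becomes a section of $\Delta_*$ of the inverse canonical sheaf, whence $G_{ij}$ is the pushforward of differential operators applied to $K^{-1}$ of the diagonal. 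Your route through the triangle $R\Gamma_{\cup\Delta_{ab}}\OO_{X^m}\to\OO_{X^m}\to Rj_*\OO_{\Conf_m(X)}$ and the nerve of the (non-generic) diagonal arrangement is workable in principle and gives a more sheaf-theoretically intrinsic picture, but unwinding that nerve into the forest-indexed decomposition is precisely the content of Theorem \ref{Dolbeault-group}, which you end up citing anyway, so the heavier machinery does not save the hard step. Two details in your sketch need correction: $G_{ij}$ must be taken in the completed analytic sense --- its sections are the closures $\overline{\C[\partial_{z^j_1},\cdots,\partial_{z^j_n}]\omega_{ij}}$ (a Schauder-type span, not a free $\HolX{\Delta_{ij}}$-module on normal derivatives), which is exactly why the statement is one of topological rings; and $d_n\omega_{ab}=[\Delta_{ab}]$ is only correct after accounting for the twist, since $\omega_{ab}$ is valued in the inverse canonical sheaf of the diagonal --- the paper describes $d_n$ as the Gysin map twisted by $K^{-1}_{\Delta}$, the untwisted statement applying to $\omega_{BM}$, whose image is the top Atiyah class of the normal bundle (measuring the failure of the local kernels to glue) rather than a naive fundamental class.
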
 
As an example, we calculated the spectral sequence explicitly for compact tori.


\subsection{Notations} 
\begin{itemize} 
\item 
Vectors are denoted in boldface. For example, a point in $\C^n$ is denoted by $\zz=(z_1,\cdots,z_n)$. 
\item The configuration space of $m$ points in $\C^n$ is defined by 
$$
\Conf_m(\C^n)=\{(\zz ^1,\cdots,\zz ^m)\in \C^n\times\cdots\times\C^n|  \zz^i\ne\zz^j \text{ for } i\ne j\}.
$$
Here $\zz^i=(z^i_1,\cdots,z^i_n)\in\C^n$.
\item The space of smooth (resp. holomorphic) $(p,q)$-forms on an open subset $U$ of a complex manifold is denoted by $A^{p,q}(U)$ (resp. $\Omega^{p,q}(U)$). The space of holomorphic functions on $U$ is denoted by $\Hol(U)$.  The structure sheaf of $U$ is denoted by $\HolX{U}$.
\item  $A^{p,q}(U)$ (resp. $\Omega^{p,q}(U)$) is equipped with the $C^\infty$ topology, which makes it a nuclear space. The closure of a subspace of  $A^{p,q}(U)$ is always taken with respect to the $C^\infty$ topology. For example, the closure of the space of polynomial functions on $U$ is the space of power series that converge on $U$.  
\item $\bp:A^{p,q}(U)\to A^{p,q+1}(U)$ is the Dolbeault differential. The space of $(p,q)$-th Dolbeault cohomology on $U$ is denoted by $H^{\bullet,\bullet}_{\bp}(U)$. $H^{\bullet,\bullet}_{\bp}(U)$ is equipped with the quotient topology. 
\item We use $\otimes$ without subscript to denote tensor product of nuclear spaces over $\C$, which coincides with both the injective tensor product and projective tensor product. We have $\Hol(U)\otimes \Hol(V) =\Hol(U\times V)$.
\item A   hat $(\,\widehat{\cdot}\,)$ over a symbol indicates that it is omitted.  
\item Denote $\partial_i=\partial_{z_i}=\pp{}{z_i}$. A holomorphic derivative acts on components of differential forms, i.e., acts as Lie derivatives.
\item A function of  vectors depends on the components of those vectors. For example, $f(\zz^{i_1},\cdots,\zz^{i_k})$ is a holomorphic function, and $f(\vec \partial_{b_1},\cdots,\vec \partial_{b_k})$ is a power series of holomorphic derivatives.
\end{itemize}

\noindent \textbf{Acknowledgment}. The author would like to thank Professor Si Li for guidance, and thank Minghao Wang, Xiaoxiao Yang and Tianqing Zhu for helpful communications.  

\section{Preliminaries}
In this section, we briefly review some constructions in algebraic topology and refer the reader to \cite{Bott-Tu} for details.  

\noindent \textbf{\v{C}ech-Dolbeault bicomplex.} 
Let $X$ be a complex manifold with an open cover  $\U=\lr{U_i}_{i\in I}$. Define
$$
C^r(\U,A^{p,q}) = \prod_{i_0<\cdots<i_r} A^{p,q}(U_{i_0 \cdots i_r})
$$
where $U_{i_0 \cdots i_r}=U_{i_0}\cap \cdots \cap U_{i_r}$. 
Denote by $\delta$ the \v{C}ech differential. The triple  $(C^\bullet(\U,A^{p,\bullet}),\delta,\bp)$ 
is called the  $p$-th \v{C}ech-Dolbeault bicomplex.
Denote its  (hyper-)cohomology   
by $H^{p,q}(\U)$.
\begin{prop}\label{Cech-resolution-iso}
The restriction map $A^{p,q}(X)\to C^0(\U,A^{p,q})$ induces an isomorphism  
$$
H^{p,q}_\bp(X) \simeq H^{p,q}(\U).
$$
\end{prop}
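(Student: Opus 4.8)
The plan is to treat this as the standard comparison between the Dolbeault complex and its \v{C}ech resolution, proved by a double-complex argument exactly as in the de Rham case in \cite{Bott-Tu}. Form the double complex $K^{r,q}=C^r(\U,A^{p,q})$ with the \v{C}ech differential $\delta$ raising $r$ and the Dolbeault differential $\bp$ raising $q$; after the usual sign twist these anticommute, and by definition the cohomology of the associated total complex is $H^{p,\bullet}(\U)$. The restriction map $A^{p,q}(X)\to C^0(\U,A^{p,q})$ is the augmentation of the column $r=0$ and commutes with $\bp$, so it induces the comparison map $H^{p,q}_\bp(X)\to H^{p,q}(\U)$; the task is to show this map is an isomorphism, and (since the cohomology groups carry the $C^\infty$/quotient topologies throughout the paper) a topological one.

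The central step is the exactness of the augmented rows: for each fixed $q$ the sequence
$$0\to A^{p,q}(X)\xrightarrow{\ \mathrm{res}\ } C^0(\U,A^{p,q})\xrightarrow{\ \delta\ } C^1(\U,A^{p,q})\xrightarrow{\ \delta\ }\cdots$$
is exact. This is the vanishing of higher \v{C}ech cohomology of the sheaf $A^{p,q}$ with respect to an arbitrary open cover, which holds because $A^{p,q}$ is a fine sheaf on the paracompact manifold $X$. Concretely I would choose a partition of unity $\{\rho_i\}_{i\in I}$ subordinate to $\U$ and write down the usual contracting homotopy $K\colon C^{r+1}(\U,A^{p,q})\to C^r(\U,A^{p,q})$, $(K\eta)_{i_0\cdots i_r}=\sum_{j\in I}\rho_j\,\eta_{j i_0\cdots i_r}$, each summand extended by zero outside $\operatorname{supp}\rho_j$; one then checks the homotopy identity $\delta K+K\delta=\mathrm{id}$ in positive \v{C}ech degree and, in degree zero, that every $\delta$-cocycle is glued by the $\rho_j$ into the image of $\mathrm{res}$. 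Since $\{\operatorname{supp}\rho_i\}$ is locally finite the defining sum is locally finite, so $K$ is continuous; note that taking $\bp$-cohomology first instead would require the cover to be Leray for Dolbeault cohomology, whereas this $\delta$-first argument needs only fineness and works for any open cover.

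With the rows exact, the ``tic-tac-toe'' lemma of \cite{Bott-Tu} — equivalently, the spectral sequence of the double complex obtained by taking $\delta$-cohomology first — collapses: the $E_1$ page is supported in the single column $r=0$, where it is the complex $(A^{p,\bullet}(X),\bp)$, so $E_2^{0,q}=H^{p,q}_\bp(X)$ is the whole total cohomology, i.e.\ $H^{p,q}(\U)\simeq H^{p,q}_\bp(X)$. Chasing the zig-zag identifies the resulting isomorphism with the map induced by the augmentation, and since it is assembled from the continuous operators $\mathrm{res}$, $\delta$, $\bp$ and the homotopy $K$, its inverse is continuous too, so it is an isomorphism of topological vector spaces.

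I expect the only genuine subtlety — the ``main obstacle'' in an otherwise formal argument — to be the bookkeeping with the $C^\infty$ topologies: one must check that the contracting homotopy, the (possibly infinite) products $\prod_{i_0<\cdots<i_r}A^{p,q}(U_{i_0\cdots i_r})$, and the quotient topologies on the cohomology all interact continuously, which is exactly where paracompactness of $X$ and local finiteness of the partition of unity are used. Everything else is standard homological algebra.
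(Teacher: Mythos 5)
Your argument is correct and is exactly the standard \v{C}ech--Dolbeault double-complex ("tic-tac-toe") proof from \cite{Bott-Tu} that the paper itself invokes for this proposition: exactness of the augmented rows via a partition of unity subordinate to $\U$, collapse of the $\delta$-first spectral sequence to the column $(A^{p,\bullet}(X),\bp)$, and identification of the resulting isomorphism with the map induced by restriction. The added remarks on continuity of the contracting homotopy and the $C^\infty$/quotient topologies are a reasonable refinement but not a departure from the paper's approach.
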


\noindent \textbf{Leray spectral sequence.}  
Let $f:X\to Y$ be a continuous map of smooth manifolds. 
Suppose there is a sufficiently fine open cover $\U=\lr{U_i}$ of $Y$. Then $f\inv\U:=\lr{f\inv(U_i)}$ is an open cover of $X$, which can be used to compute cohomology of a sheaf $\mathcal F$ on $X$. 
Consider an acyclic resolution $0 \overset{\phantom{d}}{\to} \mathcal F  \overset{\phantom{d}}{\to} \mathcal F^0 \overset{d}{\to} \mathcal F^1 \overset{d}{\to}   \cdots$ and form a bicomplex $(C^\bullet(f\inv\U,\mathcal F^\bullet),\delta,d)$. 
$H^\bullet_{\delta+d}(C^\bullet(f\inv\U,\mathcal F^\bullet))$ computes the sheaf cohomology $H^\bullet(X,\mathcal{F})$. Moreover, there is a    spectral sequence associated with the bicomplex which converges to $H^\bullet(X,\mathcal F)$ with $E_2$ page
\ali{
E_2^{p,q}= H^p(f\inv\U, \mathcal{H}^q) 
}
where $\mathcal{H}^q$ is  the presheaf on $Y$ sending $U\mapsto H^q(f\inv\U,\mathcal{F})$, whose sheafification is $R^qf_*\mathcal{F}$, the $q$-th higher direct image of $\mathcal{F}$. 
In general, there is a sheaf theoretical version of the spectral sequence with 
$$
E_2^{p,q}=H^p(R^qf_*\mathcal{F}) \Longrightarrow H^{p+q}(X,\mathcal{F}).
$$

\section{Dolbeault cohomology ring of $\Conf_m(\C^n)$}

\subsection{Dolbeault cohomology group of punctured polydisks}\label{punctured-polydisks}
Let $\omega_{BM}$ be the Bochner-Martinelli kernel
$$
\omega_{BM}(\zz,\bar \zz)=
\frac{(n-1)!}{(2\pi i)^{n}}\frac{1}{|\zz|^{2n}}\sum_{i=1}^n \bar z_i d\bar z_1 dz_1 \cdots \widehat{d\bar z_i} dz_i \cdots d\bar z_n dz_n  \in \Omega^{n,n-1 }(\C^n- 0)
$$ 
which serves as an integral kernel for the inverse of $\bp$. Specifically, let $U\subset\C^n$ be a bounded open set with smooth boundary and $f
:\overline U\to \C$ be a smooth function, then for $\zz\in U$,  
$$
f(\zz,\bar \zz) = \int_{\xxi\in \partial U} f(\xxi,\bar\xxi) \omega_{BM}(\xxi-\zz,\bar\xxi-\bar\zz) -\int_{\xxi\in  U} \bp f(\xxi,\bar\xxi) \omega_{BM}(\xxi-\zz,\bar\xxi-\bar\zz).
$$
Due to  the uniqueness of the Bochner kernel as an inverse of $\bp$ in different coordinate systems, we have 
\begin{prop}\label{BM-coor-change}
Let $\zz\to \ww=\ww(\zz)$ be a coordinate transformation that fixes $0\in \C^n$. Then 
\begin{itemize}
\item For $n=1$, $\omega_{BM}(w,\bar w)=\frac{dw}{w}=\frac{dz}{z}+$ holomorphic $1$-form on $\C$.
\item For $n\ge 2$, $\omega_{BM}(\ww,\bar\ww)=\omega_{BM}(\zz,\bar \zz)+\bp$-exact term.
\end{itemize}
\end{prop}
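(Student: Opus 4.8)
The plan is to pin $\omega_{BM}$ down by the distributional equation it solves and to track that equation through the holomorphic change of coordinates. Write $\Psi$ for the coordinate change $\ww=\ww(\zz)$, a holomorphic diffeomorphism of a polydisk $U$ about $0$ onto $\Psi(U)$ with $\Psi(0)=0$; everything below happens on the punctured neighborhood $U\setminus 0$, where $\omega_{BM}(\ww,\bar\ww)=\Psi^*\omega_{BM}$ is defined. First I would record that $\omega_{BM}$ is locally integrable across $0$: its pointwise norm is $O(|\zz|^{1-2n})$, and $\int_{|\zz|<1}|\zz|^{1-2n}\,dV<\infty$ in $\R^{2n}$, so $\omega_{BM}$ defines a current $[\omega_{BM}]$ on all of $U$. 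Feeding a test function with compact support in $U$ into the Bochner--Martinelli formula quoted above kills the boundary term and evaluates at the center to
\[
\bp[\omega_{BM}]=\pm\,\delta_0 ,
\]
the Dirac current at the origin, the normalization $\tfrac{(n-1)!}{(2\pi i)^{n}}$ being exactly what makes the coefficient unimodular. This identity is the precise meaning of ``$\omega_{BM}$ inverts $\bp$''.

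Second, I would push this identity forward along $\Psi$. Since $\Psi$ is a holomorphic diffeomorphism, pullback of currents commutes with $\bp$, carries $[\omega_{BM}]$ to $[\Psi^*\omega_{BM}]$ by the change-of-variables formula for integrable forms, and fixes $\delta_0$ because $\Psi(0)=0$ (on a test function $f$ one has $\langle\Psi^*\delta_0,f\rangle=f(\Psi^{-1}(0))=f(0)$). Hence $\bp[\Psi^*\omega_{BM}]=\pm\,\delta_0$ with the same sign, so the smooth $(n,n-1)$-form
\[
\beta:=\omega_{BM}(\ww,\bar\ww)-\omega_{BM}(\zz,\bar\zz)=\Psi^*\omega_{BM}-\omega_{BM}
\]
on $U\setminus 0$ is $\bp$-closed there, and its locally integrable extension is $\bp$-closed as a current on all of $U$: $\bp[\beta]=\pm\delta_0\mp\delta_0=0$.

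Third, I would run $\beta$ through the localization exact sequence of the inclusion $U\setminus 0\hookrightarrow U$,
\[
H^{n,n-1}_{\bp}(U)\longrightarrow H^{n,n-1}_{\bp}(U\setminus 0)\xrightarrow{\ \partial\ }H^{n,n}_{\{0\}}(U),
\]
computed with the flasque current-Dolbeault resolution of $\Omega^{n}$, whose connecting map $\partial$ sends the class of a $\bp$-closed form to the class of $\bp$ applied to any current extension across $0$. By the previous step $\partial[\beta]=[\bp[\beta]]=0$, so $[\beta]$ is in the image of $H^{n,n-1}_{\bp}(U)$. Since $\Omega^{n,q}(V)=\Omega^{0,q}(V)\wedge dz_1\wedge\cdots\wedge dz_n$ and $\bp$ ignores $dz_1\wedge\cdots\wedge dz_n$, this group is $H^{n-1}(U,\OO_U)$, and $U$ is Stein. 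For $n\ge 2$ it vanishes, so $[\beta]=0$ in $H^{n,n-1}_{\bp}(U\setminus 0)$; because smooth forms and currents compute the same Dolbeault cohomology on $U\setminus 0$, the equality ``$\bp$-exact as a current'' promotes to $\beta=\bp\gamma$ with $\gamma$ a smooth $(n,n-2)$-form, which is the claim. For $n=1$ the same sequence gives $[\beta]\in H^{1,0}_{\bp}(U)=\Omega^{1,0}(U)$, i.e.\ $\beta$ is the restriction of a holomorphic $1$-form near $0$; this is the elementary fact that, writing $w=z\,h(z)$ with $h(0)\ne 0$, $d\log w=d\log z+\tfrac{h'(z)}{h(z)}\,dz$ with $h'/h$ holomorphic wherever $h\ne 0$.

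The main obstacle is the bookkeeping with currents: establishing $\bp[\omega_{BM}]=\pm\delta_0$ with the correct normalization and sign, checking that the pullback of the Bochner--Martinelli current is the Bochner--Martinelli form in the new coordinates, and — once $\beta$ is known to be $\bp$-exact as a current — invoking the regularity of $\bp$ to produce a \emph{smooth} primitive on $U\setminus 0$. A minor point to flag is that the coordinate change is a priori only local, so the statement is genuinely about a punctured neighborhood of $0$; if $\Psi$ extends to all of $\C^{n}$ the argument is unchanged for $n\ge 2$ (as $H^{n-1}(\C^{n},\OO)=0$), while for $n=1$ an automorphism of $\C$ fixing $0$ is linear and then $dw/w=dz/z$ exactly.
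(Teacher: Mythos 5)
Your proof is correct, but it takes a genuinely different route from the paper. The paper justifies Proposition \ref{BM-coor-change} by the uniqueness of the Bochner kernel as an inverse of $\bp$, implemented \v{C}ech-theoretically: over the cover $\{z_i\neq 0\}$ of the punctured polydisk, $\omega_{BM}$ is connected by an explicit zig-zag (the forms $\omega_p,\alpha_p$ appearing in the proof of Proposition \ref{Dolbeault-punctured-disk}) to the Cauchy kernel $\gamma=(2\pi i)^{-n}\,dz_1\cdots dz_n/(z_1\cdots z_n)$, and coordinate-independence is then checked on $\gamma$ up to coboundary, one coordinate at a time. You instead characterize $\omega_{BM}$ by the current equation $\bp[\omega_{BM}]=\pm\delta_0$, transport that equation through the biholomorphism (pullback of currents commutes with $\bp$, sends the integration current of a locally integrable form to that of its pullback, and fixes $\delta_0$ because the transformation fixes $0$), and then kill the $\bp$-closed difference $\beta$ using $H^{n,n-1}_\bp(U)\cong H^{n-1}(U,\OO)=0$ on a Stein neighborhood $U$ for $n\ge 2$, plus the smooth/current comparison on $U\setminus 0$; for $n=1$ you reduce to the elementary $w=zh(z)$ computation. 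This is exactly a rigorous, coordinate-free implementation of the paper's one-line justification, and it carries the normalization and sign along automatically, whereas the paper's route stays inside the explicit \v{C}ech--Dolbeault machinery it needs anyway for Proposition \ref{Dolbeault-punctured-disk}. One small repair: the sheaves of currents are fine (soft) but not flasque, so the phrase ``flasque current-Dolbeault resolution'' and the current-extension description of the connecting map of the localization sequence need more care than you give; you can sidestep the localization sequence entirely by observing that your $\bp$-closed current extension of $\beta$ already defines a class in the current Dolbeault cohomology of $U$, which equals $H^{n-1}(U,\Omega^n)=0$ for $n\ge 2$, so $[\beta]=\bp T$ on $U$, and restricting to $U\setminus 0$ and using that smooth forms and currents compute the same Dolbeault cohomology there produces the smooth primitive. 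Your closing caveat is also the right reading of the statement: the transformation is local, so for $n=1$ the holomorphic $1$-form lives on the domain of the coordinate change (the paper's ``on $\C$'' is literal only for a global change of variables).
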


Let us  write $\omega_{BM}=dz_1\cdots dz_n \omega$, where
$$
\omega(\zz,\bar \zz)=(-1)^{\frac {n(n-1)}{2}+1}\frac{(n-1)!}{(2\pi i)^{n}}\frac{1}{|\zz|^{2n}}\sum_{i=1}^n (-1)^i \bar z_i d\bar z_1 \cdots \widehat{d\bar z_i} \cdots d\bar z_n  \in \Omega^{0,n-1 }(\C^n- 0).
$$
$\omega$ is a $\bp$-closed form on $\C^n-0$. Since $\partial_i:=\frac{\partial}{\partial z_i}$ commutes with $\bp$, applying $\partial_i$ any number of times to $\omega$ still yields a $\bp$-closed form. These forms span a subspace of smooth forms
\ali{
\C[\partial_1,\cdots,\partial_n]\omega
=\bigoplus_{i_1,\cdots,i_n\ge 0} \C\cdot \partial^{i_1}_{1}\cdots \partial^{i_n}_{n}\omega 
\subset A^{0,n-1}(\C^n-0).
}
In this paper, the closure of a space of differential forms is always taken with respect to the $C^\infty$ topology (see Notations). The closure of $\C[\partial_1,\cdots,\partial_n]\omega$ is denoted by $\overline{\C[\partial_1,\cdots,\partial_n]\omega}$, which is clearly a subspace of $\Ker \bp$. We also denote 
\ali{
\C[[\partial_1,\cdots,\partial_n]]\omega
=\prod_{i_1,\cdots,i_n\ge 0} \C\cdot \partial^{i_1}_{1}\cdots \partial^{i_n}_{n}\omega.
}
 
In practice, we will also consider polydisks. A polydisk $\DD^n:=\prod_{i=1}^n \DD_i$ is a product of disks $\DD_i=\lr{z_i\in \C | |z_i|<r_i}$, $0<r_i\le\infty$. The following propositions states $\partial^{i_1}_{1}\cdots \partial^{i_n}_{n}\omega$ forms a Schauder basis for  $H^{0,n-1}_{\bp}(\DD^n-0)$.

\begin{prop}\label{Dolbeault-punctured-disk}
$
H^{0,\bullet}_{\bp}(\DD^n-0)=
\begin{cases}
\Hol(\DD^n) & \bullet=0 \\
\overline{\C[\partial_1,\cdots,\partial_n]\omega}   & \bullet=n-1 \\
0             & \text{else}
\end{cases}.
$ 
\end{prop}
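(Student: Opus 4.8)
The plan is to run the \v{C}ech--Dolbeault bicomplex of Proposition \ref{Cech-resolution-iso} for a convenient cover, collapse its spectral sequence to the \v{C}ech cohomology of the structure sheaf, evaluate the latter with standard vanishing theorems, and finally match the outcome with the explicit forms $\partial_1^{i_1}\cdots\partial_n^{i_n}\omega$. First I would cover $\DD^n-0$ by $U_i=\lr{z_i\ne0}\subset\DD^n$, $i=1,\dots,n$, and set $\U=\lr{U_1,\dots,U_n}$. For $i_0<\dots<i_r$ the intersection $U_{i_0\cdots i_r}$ is a product of complex manifolds in which the factors indexed by $i_0,\dots,i_r$ are punctured disks $\DD_j-0$ and the remaining ones are full disks $\DD_j$; a punctured disk is a non-compact Riemann surface, hence Stein, so every $U_{i_0\cdots i_r}$ is Stein. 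Combining the vanishing $H^{0,q}_\bp(\DD_j)=0=H^{0,q}_\bp(\DD_j-0)$ for $q\ge1$ (disks and punctured disks being Stein) with the K\"unneth formula for $\bp$-cohomology of nuclear Fr\'echet complexes with closed range, one gets that $H^{0,\bullet}_\bp(U_{i_0\cdots i_r})$ is concentrated in degree $0$, where it is $\Hol(U_{i_0\cdots i_r})$. Hence in the spectral sequence of $(C^\bullet(\U,A^{0,\bullet}),\delta,\bp)$ obtained by taking $\bp$-cohomology first, the $E_1$ page is the \v{C}ech complex $C^\bullet(\U,\HolX{\DD^n-0})$ concentrated in $\bp$-degree $0$; it degenerates at $E_2$, and with Proposition \ref{Cech-resolution-iso} this gives $H^{0,q}_\bp(\DD^n-0)\cong H^q(\U,\HolX{\DD^n-0})$. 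Since all members and intersections of $\U$ are Stein, $\U$ is a Leray cover (Cartan's Theorem B), so the right side is the sheaf cohomology $H^q(\DD^n-0,\HolX{})$.

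Next I would compute $H^q(\DD^n-0,\HolX{})$ via the local-cohomology exact sequence at the point $0\in\DD^n$. As $\DD^n$ is Stein, $H^{>0}(\DD^n,\HolX{})=0$, so $H^q(\DD^n-0,\HolX{})\cong H^{q+1}_{\{0\}}(\DD^n,\HolX{})$ for $q\ge1$ while $H^0(\DD^n-0,\HolX{})=\Hol(\DD^n-0)$. Because $\HolX{\DD^n}$ has depth $n$ at $0$ and $\{0\}$ has codimension $n$, the local cohomology $H^j_{\{0\}}(\DD^n,\HolX{})$ vanishes unless $j=n$; therefore $H^q(\DD^n-0,\HolX{})=0$ for $1\le q\le n-2$, and $H^{n-1}(\DD^n-0,\HolX{})\cong H^n_{\{0\}}(\DD^n,\HolX{})$, which, being the top cohomology of the \v{C}ech complex of the $n$-element cover $\U$, equals $\Hol(\prod_{j=1}^n(\DD_j-0))/\sum_{i=1}^n\Hol(U_{1\cdots\widehat{i}\cdots n})$. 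For the degree-zero term, Hartogs' extension theorem gives $\Hol(\DD^n-0)=\Hol(\DD^n)$ when $n\ge2$; for $n=1$ the claimed formula reads $H^{0,0}_\bp(\DD-0)=\Hol(\DD)\oplus\overline{\C[\partial_1]\omega}$, which is $\Hol(\DD-0)$ by Laurent expansion, $\omega$ being a nonzero multiple of $z_1^{-1}$ and $\overline{\C[\partial_1]\omega}$ the space of principal parts.

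Finally I would identify the top cohomology with $\overline{\C[\partial_1,\dots,\partial_n]\omega}$. Stripping the factor $dz_1\cdots dz_n$, the isomorphism $H^{0,n-1}_\bp(\DD^n-0)\cong H^{n-1}(\U,\HolX{\DD^n-0})$ carries the class of $\omega$ to a nonzero multiple of the class of the Cauchy kernel $z_1^{-1}\cdots z_n^{-1}$ in $\Hol(\prod_j(\DD_j-0))$, via the standard homotopy identifying $\omega_{BM}$ with $\gamma=(2\pi i)^{-n}dz_1\cdots dz_n/(z_1\cdots z_n)$ in the total complex (cf. the proof of Proposition \ref{BM-coor-change}). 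Each $\partial_{z_j}$ commutes with $\bp$, $\delta$, and restriction maps, hence acts on the whole bicomplex and on cohomology compatibly with this isomorphism, so $\partial_1^{i_1}\cdots\partial_n^{i_n}\omega$ corresponds to a nonzero multiple of $z_1^{-1-i_1}\cdots z_n^{-1-i_n}$. On the \v{C}ech side, $\sum_i\Hol(U_{1\cdots\widehat{i}\cdots n})$ consists exactly of the Laurent series having some exponent $\ge0$, so the monomials $z^\alpha$ with all $\alpha_j\le-1$ descend to a Schauder basis of the quotient; pulling back, $\lr{\partial_1^{i_1}\cdots\partial_n^{i_n}\omega}$ is a Schauder basis of $H^{0,n-1}_\bp(\DD^n-0)$. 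The natural continuous map $\overline{\C[\partial_1,\dots,\partial_n]\omega}\to H^{0,n-1}_\bp(\DD^n-0)$ (defined since this closed subspace lies in $\Ker\bp$) is then a topological isomorphism: surjectivity and continuity of the inverse follow from the Schauder-basis statement, and injectivity says a convergent series $\sum c_{i_1\cdots i_n}\partial_1^{i_1}\cdots\partial_n^{i_n}\omega$ is $\bp$-exact only if it vanishes, which is the corresponding transparent fact for the Laurent monomials in the quotient.

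I expect the real obstacle to be functional-analytic rather than homological: one must ensure that all complexes in play consist of nuclear Fr\'echet spaces whose differentials have closed range, so that (i) the K\"unneth formula applies in the first step, (ii) the cohomology produced by the spectral sequence is Hausdorff and carries the quotient topology, and (iii) the quotient $\Hol(\prod_j(\DD_j-0))/\sum_i\Hol(U_{1\cdots\widehat{i}\cdots n})$ is Hausdorff with $\partial_1^{i_1}\cdots\partial_n^{i_n}\omega$ an honest Schauder basis rather than a merely dense span. Closed range of $\bp$ on a (punctured) polydisk is classical via $L^2$/elliptic estimates, and closedness of $\sum_i\Hol(U_{1\cdots\widehat{i}\cdots n})$ inside $\Hol(\prod_j(\DD_j-0))$ follows by Laurent expansion and the continuous projection onto the ``all exponents $\le-1$'' part; packaging these facts into the clean topological statement of the Proposition is where the care goes.
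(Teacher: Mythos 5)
Your proposal is correct and follows essentially the same route as the paper: the \v{C}ech cover $U_i=\lr{z_i\ne 0}$ of $\DD^n-0$, Steinness of the intersections plus the Laurent-series splitting of $\Hol(\DD_i-0)$, identification of the top \v{C}ech group with the closed span of $\partial$-derivatives of the Cauchy kernel, and transfer to Dolbeault cohomology via the \v{C}ech--Dolbeault zig-zag, which commutes with each $\partial_{z_j}$. The only differences are cosmetic: you get the vanishing in degrees $1,\dots,n-2$ from local cohomology and depth where the paper quotes the second Riemann continuation theorem, and you cite the standard homotopy between $\omega_{BM}$ and the Cauchy kernel where the paper writes out the explicit cochains $\omega_p,\alpha_p$ realizing it.
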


\begin{proof}
By Hartogs' extension theorem, we have $H^{0,0}_{\bp}(\DD^n-0)=\Hol(\DD^n)$. Moreover, the second Riemann continuation theorem (see for example \cite{Stein}, Chapter V) 
shows $H^{0,\bullet}_{\bp}(\DD^n-0)=0$ for $0<\bullet<n-1$.

In the remainder of the proof, we compute  $H^{0,n-1}_{\bp}(\DD^n-0)$ by \v Cech cohomology. Let $U_i=\{(z_1,\cdots,z_n)\in\DD^n|z_i\neq 0\}$, then $\U=\{U_1,\cdots,U_n\}$ is an open cover of $\DD^n-0$. 
We need to compute Dolbeault cohomology of the intersection $U_{i_0 \cdots i_r}=U_{i_0}\cap \cdots \cap U_{i_r}$. Notice that $U_{i_0 \cdots i_r}$ is the product of some disks and some punctured disks, and any finite product of Stein spaces is again Stein. As a result, $H^{0,\bullet>0}_{\bp}(U_{i_0 \cdots i_r})=0$ and $\Hol(U_{i_0 \cdots i_r})$ is a tensor product of holomorphic functions on $\DD_j$ or $\DD_i-0$  
\begin{equation*}\label{hol-decomposition-1}
\Hol(U_{i_0 \cdots i_r})
={\bigotimes_{j\notin \lr{i_0,\cdots,i_r}}\Hol(\DD_j)} \,\otimes {\bigotimes_{i\in \lr{i_0,\cdots,i_r}}\Hol(\DD_i-0)}. \tag{*}
\end{equation*}
We claim that 
\begin{equation*}\label{hol-decomposition-2}
\Hol(\DD_i-0)=\Hol(\DD_i)\oplus \C\{z_i\inv\}, \tag{**}
\end{equation*}
where $\C\{z_i\inv\}$ is the space of power series $\sum_{k>0} a_{k} z_i^{-k}$ that converges on $\C-0$. 
Indeed, let $\DD-0$ be a punctured disk with radius $r$, and $f\in \Hol(\DD-0)$ be a holomorphic function on $\DD-0$ with Laurent series $\sum_{k\in\Z} a_kz^k$. The non-negative part $\sum_{k\ge0} a_kz^k$ converges on $\DD$, so it belongs to $\Hol(\DD_i)$. To see the negative part $\sum_{k>0} a_kz^{-k}$ converges on $\C-0$, let $w=z\inv$, then the series converges on $\lr{w\in\C  \middle| |w|>\frac{1}{r}}$, so its radius of convergence is infinite when expressed in $w$.

Using (\ref{hol-decomposition-1}, \ref{hol-decomposition-2}), we can explicitly compute the \v Cech cohomology $H^{\bullet}(\U,\Hol)$. The $(n-1)$-th term is
$$
H^{n-1}(\U,\Hol) =
\text{Coker}\!\;(\oplus_{i=1}^n \Hol(U_{1\cdots \hat i \cdots n})\overset{\delta}{\rightarrow} \Hol(U_{1\cdots  n}))
=\C\{z_1\inv\}\otimes\cdots\otimes \C\{z_n\inv\}.
$$
The key observation is that $H^{n-1}(\U,\Hol)$ is generated by the Cauchy kernel 
$$
\gamma=(2\pi i)^{-n}\frac {1}{z_1\cdots z_n}\in 
\Hol((\C-0)^n).
$$
Denote 
$$
\C[\partial_1,\cdots,\partial_n]\gamma := \bigoplus_{i_1,\cdots,i_n\ge 0} \C\cdot \partial^{i_1}_{1}\cdots \partial^{i_n}_{n}\gamma \subset \Hol ((\C-0)^n),
$$
then $H^{n-1}(\U,\Hol)=\overline{\C[\partial_1,\cdots,\partial_n]\gamma}$. 

Recall the Dolbeault isomorphism can be realized as a composition of isomorphisms (see for example \cite{Griffiths-Harris}, Chapter 5) 
$$
H^{n-1}(\U,\Hol) 
\simeq H^{n-2}(\U,\Omega^{0,1}) 
\simeq \cdots 
\simeq H^{1}(\U,\Omega^{0,n-2}) 
\simeq \Gamma(\U,\Omega^{0,n-1})/\bp \Gamma(\U,A^{0,n-2})
=H^{0,n-1}_{\bar\partial}(\DD^n-0)
$$
where each isomorphism is induced by the exact sheaf sequence 
$$
0\lra \mathfrak E^{0,\bullet}\lra \mathfrak A^{0,\bullet}\overset{\bp}{\lra} \mathfrak E^{0,\bullet+1}\lra 0.
$$
Here $\mathfrak A^{0,\bullet}$ is  the sheaf   of smooth $(0,\bullet)$-forms and $\mathfrak E^{0,\bullet}$ is  the subsheaf of $\bp$-closed $(0,\bullet)$-forms. 

The isomorphisms above can be constructed explicitly by using a ``partition of unity" $1=\sum_i  \frac{z_i\bar z_i}{|\zz|^2}$. 
Let 
$$
\omega_p= \left ( \frac{(n-1-p)!}{(2\pi i)^{n}}|\zz|^{-2(n-p)}\frac {(-1)^{\frac {n(n-1)}{2}}} {z_{i_1}\cdots z_{i_{p+1}}}\sum_{r=1,\cdots,{p+1}}\sum_{k\neq i_1,\cdots,\widehat{i_r},\cdots,i_{p+1}}  \bar z_{k}z_{i_r} \iota _{\bar \partial_{k}}d\bar z_{i_r} \iota _{\bar \partial_{i_1}}\cdots\iota _{\bar \partial_{i_{p+1}}}d^{n}\bar z \right )_{i_1\cdots i_{p+1}},
$$
$$
\alpha_p=\left (\frac{(n-2-p)!}{(2\pi i)^{n}}|\zz|^{-2(n-1-p)}\frac {(-1)^{\frac {n(n-1)}{2}}} {z_{i_1}\cdots z_{i_{p+1}}}\sum_{k\neq i_1,\cdots,i_{p+1}}  \bar z_{k} \iota _{\bar \partial_{k}} \iota _{\bar \partial_{i_1}}\cdots\iota _{\bar \partial_{i_{p+1}}}d^{n}\bar z \right )_{i_1\cdots i_{p+1}}.
$$
Then 
$\omega_0=\omega$, $\omega_{n-1}=\gamma$, and
$\bp\alpha_p=-\omega_p$, $\delta\alpha_p=\omega_{p+1}$.


Under the above chain of isomorphisms, $\gamma\in H^{n-1}(\U,\Hol)$ corresponds to $\omega\in H^{0,n-1}_{\bar\partial}(\DD^n-0)$. 
Moreover, these isomorphisms do not affect $\partial_i$. Thus $\overline{\C[\partial_1,\cdots,\partial_n]\omega}$ is a space of representatives for $H^{0,n-1}_{\bar\partial}(\DD^n-0)$. 
\end{proof}

The following lemma is a slight generalization of Proposition \ref{Dolbeault-punctured-disk} and will be used later. For any subset $V\subset \C^k\times \C^n$, denote $V^*=V\cap (\C^k\times \lr{0})$ and $V^\circ=V\cap (\C^k\times (\C^n-0))$.  Let $p_1:\C^k\times \C^n \to \C^k$ be the projection onto the first factor. 
\begin{lem}\label{open-puncture}
Suppose $V\subset \C^k\times \C^n$ is a convex open set. Then 
$H^{0,\bullet}_\bp(V^\circ)=\begin{cases}
\Hol(V) & \bullet=0 \\
\Hol(p_1(V^*)) \otimes  \overline{\C[\partial_1,\cdots,\partial_n]\omega} & \bullet =n-1 \\
0 & \text{else}
\end{cases}$.
\end{lem}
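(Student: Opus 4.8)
The plan is to reduce Lemma~\ref{open-puncture} to Proposition~\ref{Dolbeault-punctured-disk} by exhausting the convex open set $V$ with polydisk-like pieces and passing to a limit, while keeping track of the product structure in the $\C^k$ and $\C^n$ directions. The $\bullet = 0$ case is immediate from Hartogs: a holomorphic function on $V^\circ$ extends across the codimension-$n$ subset $V^*$ (here $n \ge 2$; the case $n=1$ having already been disposed of), so $H^{0,0}_\bp(V^\circ) = \Hol(V^\circ) = \Hol(V)$. Similarly the second Riemann continuation theorem kills $H^{0,\bullet}_\bp(V^\circ)$ for $0 < \bullet < n-1$, exactly as in the proof of Proposition~\ref{Dolbeault-punctured-disk}. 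So the content is the identification of $H^{0,n-1}_\bp(V^\circ)$.

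First I would set up the same \v Cech cover adapted to the $\C^n$-direction: let $U_i = \{(\mathbf{u}, \zz) \in V : z_i \ne 0\}$ for $i = 1, \dots, n$, so $\U = \{U_1, \dots, U_n\}$ covers $V^\circ$. Each multiple intersection $U_{i_0 \cdots i_r}$ is a convex open subset of $\C^k \times \C^n$ on which the last $r+1$ of the $\C^n$-coordinates are nonvanishing. The key analytic input is that such a set is Stein (convex open sets are Stein, and removing the coordinate hyperplanes $\{z_{i} = 0\}$ preserves Steinness since they are defined by one holomorphic equation, i.e. these are again domains of holomorphy), so $H^{0,\bullet > 0}_\bp(U_{i_0 \cdots i_r}) = 0$ and the \v Cech complex $C^\bullet(\U, \Hol)$ computes $H^{0,\bullet}_\bp(V^\circ)$ via Proposition~\ref{Cech-resolution-iso} and the Dolbeault-to-\v Cech comparison used in Proposition~\ref{Dolbeault-punctured-disk}. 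Then I compute $H^{n-1}(\U, \Hol) = \mathrm{Coker}\bigl(\oplus_{i} \Hol(U_{1 \cdots \hat i \cdots n}) \xrightarrow{\delta} \Hol(U_{1 \cdots n})\bigr)$. The analogue of the decomposition (\ref{hol-decomposition-2}) is the statement that for a convex $W \subset \C^k \times \C$, $\Hol(W \setminus \{z = 0\})$ splits, as a topological vector space, as $\Hol(W) \oplus (\text{principal-part space})$, where the principal-part space consists of convergent series $\sum_{j>0} c_j(\mathbf{u}) z^{-j}$ with $c_j \in \Hol(p_1(W^*))$ and suitable growth; iterating this over the $n$ coordinates and taking the cokernel, all ``mixed'' terms that still contain a holomorphic (non-principal) factor in some $z_i$ are hit by $\delta$, leaving exactly the fully-principal part $\Hol(p_1(V^*)) \,\widehat\otimes\, \overline{\C[\partial_1, \dots, \partial_n]\gamma}$, which under the Dolbeault isomorphism (unchanged by the $\partial_i$, and tensored with the identity on the $\Hol(p_1(V^*))$ factor) becomes $\Hol(p_1(V^*)) \otimes \overline{\C[\partial_1, \dots, \partial_n]\omega}$.

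I would carry out the coordinatewise Laurent splitting carefully: writing $f \in \Hol(U_{1\cdots n})$ as an iterated Laurent expansion in $z_1, \dots, z_n$ with coefficients holomorphic in $\mathbf{u}$, one separates each variable into its non-negative and strictly-negative parts; a $2^n$-fold decomposition results, and all but one summand lies in the image of some $\Hol(U_{1 \cdots \hat i \cdots n})$ (namely the summand whose $z_i$-part is non-negative lies in $\Hol(U_{1\cdots \hat i \cdots n})$, up to the sign conventions in $\delta$), so the cokernel is the single summand with all parts strictly negative. The convergence domains: as in Proposition~\ref{Dolbeault-punctured-disk}, the substitution $w_i = z_i^{-1}$ shows each strictly-negative part extends to a function holomorphic for $z_i \in \C \setminus 0$, and the $\mathbf{u}$-dependence is holomorphic on $p_1(V^*)$ by Hartogs-type arguments plus the fact that $V$ convex forces the fibers to vary nicely; hence the fully-principal summand is $\Hol(p_1(V^*)) \widehat\otimes \overline{\C[\partial_1,\dots,\partial_n]\gamma}$ as claimed, using that the completed tensor product of nuclear spaces behaves well (as recorded in the Notations).

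The main obstacle I anticipate is the \emph{topological} bookkeeping rather than any new geometric idea: one must check that the Laurent-type decomposition of $\Hol(W \setminus \{z=0\})$ is a topological direct sum in the $C^\infty$ (equivalently compact-open) topology, that it is compatible with the $\widehat\otimes$ over the $\mathbf{u}$-variables, that taking the \v Cech cokernel commutes with these completions, and that the resulting closed subspace is genuinely $\overline{\C[\partial_1,\dots,\partial_n]\gamma}$ (not something larger). Convexity of $V$ is used precisely to guarantee that $p_1(V^*)$ is itself convex (hence Stein) and that the fiberwise Laurent expansions glue, so I would isolate a short lemma: for $V$ convex and $\pi = p_1$, restriction $\Hol(V) \to \Hol(p_1(V^*))$ and the fiberwise expansion together yield the stated splitting. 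Modulo this nuclear-space hygiene, the proof is a verbatim upgrade of the proof of Proposition~\ref{Dolbeault-punctured-disk} with the extra tensor factor $\Hol(p_1(V^*))$ carried along inertly through every isomorphism in the chain $H^{n-1}(\U,\Hol) \simeq \cdots \simeq H^{0,n-1}_\bp(V^\circ)$.
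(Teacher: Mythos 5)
Your reduction of the degrees $0<\bullet<n-1$ and $\bullet=0$ is fine, but the heart of your argument for $\bullet=n-1$ has a genuine gap: the coordinatewise Laurent splitting you invoke does not exist on $U_{1\cdots n}=V\cap\{z_1\ne0,\dots,z_n\ne0\}$ for a general convex $V$. The iterated Laurent expansion centered at $\{z=0\}$ (and the resulting $2^n$-fold decomposition into ``non-negative'' and ``principal'' parts) converges on the whole domain only when the domain is Reinhardt in the fiber variables, as it is for the punctured polydisk $\prod_i(\DD_i-0)$ in Proposition \ref{Dolbeault-punctured-disk}; a convex $V$ need not have this property, so a function in $\Hol(U_{1\cdots n})$ admits such an expansion only on a small punctured polydisk near each point of $V^*$, and your claim that ``all but the fully-principal summand is hit by $\delta$'' has no global meaning. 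The same problem already sinks your proposed key sub-lemma, the analogue of (\ref{hol-decomposition-2}): for convex $W\subset\C^k\times\C$ with $p_1(W^*)\subsetneq p_1(W)$, the asserted splitting $\Hol(W\setminus\{z=0\})\cong\Hol(W)\oplus\{\sum_{j>0}c_j(\mathbf{u})z^{-j},\ c_j\in\Hol(p_1(W^*))\}$ is not even well-formed, because a principal part whose coefficients are only holomorphic over $p_1(W^*)$ is not a function on the part of $W\setminus\{z=0\}$ lying over $p_1(W)\setminus p_1(W^*)$; and if you instead took coefficients in $\Hol(p_1(W))$ you would be led to the wrong answer (an $\Hol(p_1(V))$-factor rather than $\Hol(p_1(V^*))$). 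So the issue is not ``nuclear-space hygiene'' but the very existence of your decomposition.

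This is exactly the point where the paper localizes instead of arguing globally: it covers $V$ by polydisks $\mathcal V=\{V_i\}$, notes that each $V_i^\circ$ is either a polydisk or of the model form $\DD^k\times(\DD^n-0)$ (where the Laurent/\v Cech computation \emph{does} apply and gives $\Hol(\DD^k)\otimes\overline{\C[\partial_1,\cdots,\partial_n]\omega}$), and runs the \v Cech--Dolbeault spectral sequence for the cover $\mathcal V^\circ$. The two nonzero rows of the $E_1$ page are the \v Cech complex of $\Hol$ on $\mathcal V$ (giving $\Hol(V)$) and the complex $C^\bullet(p_1(\mathcal V^*),\Hol)\otimes\overline{\C[\partial_1,\cdots,\partial_n]\omega}$ (giving $\Hol(p_1(V^*))\otimes\overline{\C[\partial_1,\cdots,\partial_n]\omega}$, since $p_1(V^*)$ is convex and $p_1(\mathcal V^*)$ is a polydisk cover of it); the principal parts exist only locally over $p_1(V^*)$, and it is the \v Cech gluing that produces the $\Hol(p_1(V^*))$ factor. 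If you want to keep your single cover $\{U_i\}$ by $\{z_i\ne0\}$, you would still have to compute the cokernel of $\delta$ by some localization of this kind, which in effect reproduces the paper's argument; as written, your proof only covers the case where $V$ is (fiberwise) a polydisk-like Reinhardt domain.
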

\begin{proof}
Let  $\mathcal V =\lr{V_i}_{i\in I}$ be 
a polydisk open cover of $V$. It induces a polydisk open cover $p_1(\mathcal V^*):=\lr{p_1(V_i^*)}_{i\in I, V_i^* \ne\emptyset}$ of $p_1(V^*)$, and an open cover $\mathcal V^\circ:=\lr{V_i^\circ}_{i\in I}$ of $V^\circ$. Notice that the Dolbeault cohomology of $V_i^\circ$ is simple: $V_i^\circ$ is of the form $\DD^k\times \DD^n$ or $\DD^k\times(\DD^n-0)$, and it is easy to see
$$
H^{0,\bullet>0}_\bp(\DD^k\times(\DD^n-0))= \Hol(\DD^k) \otimes \overline{\C[\partial_1,\cdots,\partial_n]\omega}.
$$
using \v{C}ech-Dolbeault cohomology with respect to an cover $\{\DD^k\times \lr{z_i\ne 0}\}$ of $\DD^k\times(\DD^n-0)$. 
 
Now we can compute cohomology of the \v{C}ech-Dolbeault bicomplex $C^{\bullet}(\mathcal V^\circ,A^{0,\bullet})$.  
The spectral sequence associated with the bicomplex (with cohomological Serre grading) collapses at 
page $2$

\begin{center}
\begin{sseqpage}[cohomological Serre grading, no x ticks, no y ticks, x range = {0}{2}, y range = {0}{2},   classes = {draw = none },  yscale = 0.9  
]
\begin{scope}[ background ]
\node at (0.6,-2) {E_1 \text{ Page} \quad H^{\bullet}_\bp C^{\bullet}(\mathcal V^\circ,A^{0,\bullet}) };

\node at (0, -1) {0};
\node at (1, -1) {1}; 
\node at (2, -1) {\cdots}; 

\node at (-1, 0) {0}; 
\node at (-1, 1) {\vdots}; 
\node at (-1, 2) {n-1}; 
\end{scope}

\class["\,\#\,"] (0,0) 
\class["\,\#\,"](1,0) 
\class["\,\cdots\,"](2,0)

\class["\,*\,"](0,2)  
\class["\,*\,"](1,2) 
\class["\,\cdots\,"](2,2)

\d["\delta"']1 (0,0)
\d["\delta"']1 (1,0) 

\d["\delta"']1 (0,2)
\d["\delta"']1 (1,2) 
\end{sseqpage}  
\qquad\qquad\qquad
\begin{sseqpage}[cohomological Serre grading, no x ticks, no y ticks, x range = {0}{2}, y range = {0}{2},   classes = {draw = none },  yscale = 0.9
]
\begin{scope}[ background ]
\node at (0.65,-2) {E_2 \text{ Page} \quad H^{\bullet}_\delta H^{\bullet}_\bp C^{\bullet}(\mathcal V^\circ,A^{0,\bullet}) };

\node at (0, -1) {0};
\node at (1, -1) {1}; 
\node at (2, -1) {\cdots}; 

\node at (-1, 0) {0}; 
\node at (-1, 1) {\vdots}; 
\node at (-1, 2) {n-1}; 
\end{scope}

\class["\,\blacksquare\,"](0,0)  
\class["\,\square \,"] (0,2)  
 
\end{sseqpage}    
\end{center}
Here 
\begin{itemize}
\item $*\overset{\delta}{\rightarrow} *\overset{\delta}{\rightarrow} \cdots $ is 
the complex 
$(C^\bullet(p_1(\mathcal V^*),\Hol) \otimes \overline{\C[\partial_1,\cdots,\partial_n]\omega},\delta \otimes \Id)$ 
with cohomology  
$$
\square=\Hol(p_1(V^*))  \otimes  \overline{\C[\partial_1,\cdots,\partial_n]\omega}.
$$
\item $\#\overset{\delta}{\rightarrow} \#\overset{\delta}{\rightarrow} \cdots $ coincides with the \v Cech complex 
$(C^\bullet(\mathcal V,\Hol),\delta)$ by Hartogs' extension theorem. \\ Its cohomology is $\blacksquare =\Hol(V)$.
\end{itemize}

\end{proof}

\subsection{Residues} 
Let $\Omega^{0,n-1}(\DD^n-0)$ be the space of holomorphic $(0,n-1)$-forms on punctured polydisk. 
In this section, we find a retraction of the embedding $\overline{\C[\partial_1,\cdots,\partial_n]\omega}\hookrightarrow \Omega^{0,n-1}(\DD^n-0)$, which kills all $\bp$-exact forms. We first need two lemmas.

Let $S^{2n-1}$ be any sphere in $\DD^n$ with center $0$. Since $\omega_{BM}(\zz)$ is  the integral kernel for the inverse of $\bp$, we have
\begin{lem}
Let  $i_1,\cdots,i_n\ge0$. Then
$$\int_{S^{2n-1}} \,dz^1\cdots dz^n\, z_1^{i_1}\cdots z_n^{i_n} \omega=
\begin{cases}
1 & i_1=\cdots=i_n=0 \\
0 & \text{else}
\end{cases}.
$$
\end{lem}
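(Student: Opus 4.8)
The plan is to read this off directly from the Bochner--Martinelli reproducing formula recalled above, applied to the holomorphic function $f(\xxi,\bar\xxi)=\xxi_1^{i_1}\cdots\xxi_n^{i_n}$. Fix $\rho>0$ so that the closed ball $\overline B=\lr{\zz\in\C^n \mid |\zz|\le\rho}$ is contained in $\DD^n$ and $S^{2n-1}=\partial B$. Since $f$ is a polynomial, it is smooth on $\overline B$ and $\bp f=0$; hence the Bochner--Martinelli formula with $U=B$ and base point $\zz=0$ reduces (the volume integral drops out) to
\[
f(0,0)=\int_{\xxi\in S^{2n-1}} f(\xxi,\bar\xxi)\,\omega_{BM}(\xxi,\bar\xxi).
\]
Unwinding $\omega_{BM}=dz_1\cdots dz_n\,\omega$ turns the right-hand side into $\int_{S^{2n-1}} dz_1\cdots dz_n\, z_1^{i_1}\cdots z_n^{i_n}\,\omega$, which is exactly the integral in the lemma. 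Since $f(0,0)=0$ unless $i_1=\cdots=i_n=0$, and $f\equiv1$ (so $f(0,0)=1$) in that case, the claim follows.

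Two minor points I would verify while writing this out. First, independence of the chosen sphere: because $f$ is holomorphic, $f\,\omega_{BM}$ is a smooth $(n,n-1)$-form on $\C^n-0$ with $\partial(f\omega_{BM})=0$ for bidegree reasons and $\bp(f\omega_{BM})=f\,\bp\omega_{BM}=0$ there, so $d(f\omega_{BM})=0$ on $\C^n-0$; Stokes' theorem then gives equality of the integral over any two spheres centered at $0$, matching the fact that the computation above never used the value of $\rho$. Second, orientations: $S^{2n-1}$ must carry the boundary orientation of $B$ used in the statement of the Bochner--Martinelli formula, and with that convention the normalizing constant $\tfrac{(n-1)!}{(2\pi i)^{n}}$ built into $\omega_{BM}$ is precisely the one producing the coefficient $1$ on the right.

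There is no genuinely hard step: the lemma is just the special case of the Bochner--Martinelli reproducing property evaluated at the center of a ball and tested against monomials, so the only thing demanding (slight) care is the sign and orientation bookkeeping needed to match the left-hand side of the displayed identity verbatim with the integrand as written in the statement.
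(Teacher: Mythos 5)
Your proof is correct and matches the paper's (implicit) justification: the paper states this lemma as a direct consequence of the Bochner--Martinelli reproducing property, which is precisely the specialization you carry out, testing against monomials at the center of the ball with the volume term dropping since $\bar\partial f=0$. The bookkeeping points you flag (sphere-independence via Stokes and the boundary orientation fixing the constant $\tfrac{(n-1)!}{(2\pi i)^n}$) are exactly the right ones and are handled correctly.
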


\begin{lem} \label{vanish-lie-derivative}
Let $\alpha$ be a holomorphic $(n,n-1)$-form on $\DD^n-0$ and $X$ be a vector field. Then $\int_{S^{2n-1}} \mathcal L_{X}\alpha=0$.
\end{lem}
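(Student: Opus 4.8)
The plan is to reduce the statement to Stokes' theorem by writing $\mathcal L_X\alpha$ as an exact form on $\DD^n - 0$. Since $\alpha$ is a holomorphic $(n,n-1)$-form, it is in particular $\bp$-closed, and because it has bidegree $(n,n-1)$ on an $n$-dimensional complex manifold, $\bp\alpha$ lands in $A^{n,n}$ but $\partial\alpha$ vanishes for degree reasons in the holomorphic directions — so $d\alpha = \bp\alpha = 0$, i.e. $\alpha$ is a closed $(2n-1)$-form. First I would invoke Cartan's magic formula: $\mathcal L_X\alpha = d\iota_X\alpha + \iota_X d\alpha = d\iota_X\alpha$, using $d\alpha = 0$. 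Hence $\mathcal L_X\alpha$ is the exterior derivative of the $(2n-2)$-form $\iota_X\alpha$, which is smooth (though not necessarily holomorphic) on $\DD^n - 0$.

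Next I would apply Stokes' theorem. The sphere $S^{2n-1}$ is a closed oriented hypersurface bounding the ball $B$ it encloses inside $\DD^n$, but $B$ contains the puncture $0$, so I cannot integrate over $B$ directly. Instead I take a small sphere $S^{2n-1}_\epsilon$ of radius $\epsilon$ around $0$ and let $A_\epsilon$ be the region between $S^{2n-1}$ and $S^{2n-1}_\epsilon$; on $A_\epsilon$ the form $\iota_X\alpha$ is smooth, so Stokes gives $\int_{S^{2n-1}} \mathcal L_X\alpha - \int_{S^{2n-1}_\epsilon} \mathcal L_X\alpha = \int_{A_\epsilon} d(d\iota_X\alpha) = 0$. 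Thus $\int_{S^{2n-1}} \mathcal L_X\alpha = \int_{S^{2n-1}_\epsilon} \mathcal L_X\alpha$ for all small $\epsilon$, so it suffices to show the right-hand side tends to $0$ as $\epsilon \to 0$.

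For the limit I would estimate the size of $\iota_X\alpha$ near the origin. The form $\alpha$ is a finite (or convergent) combination of holomorphic derivatives of the Bochner-Martinelli kernel type expressions — more to the point, $\alpha$ being holomorphic of bidegree $(n,n-1)$ on $\DD^n - 0$, its coefficients are holomorphic functions on $\DD^n - 0$, whose worst possible growth at $0$ is governed by Laurent-type expansions; after contracting with the (smooth, bounded-near-$0$) vector field $X$, the integrand $\iota_X\alpha$ restricted to $S^{2n-1}_\epsilon$ is $O(\epsilon^{-N})$ for some $N$, while the volume of $S^{2n-1}_\epsilon$ is $O(\epsilon^{2n-1})$. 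Here is where I expect the main obstacle: a crude pole bound is not enough, since $N$ could exceed $2n-1$. The resolution is that one does not integrate an arbitrary holomorphic form but one in the relevant class, and more importantly one should use that $\iota_X\alpha$ is \emph{exact away from} $0$ together with a Laurent/homogeneity decomposition: expanding $\alpha$ in homogeneous pieces $\alpha = \sum_k \alpha_k$ (where $\alpha_k$ scales by $\lambda^k$ under $\zz \mapsto \lambda\zz$), the integral $\int_{S^{2n-1}_\epsilon}\mathcal L_X\alpha$ picks up a factor of $\epsilon$ to a power determined by $k$, and only the scale-invariant piece survives the $\epsilon\to 0$ limit; but that piece, being the derivative $d\iota_X\alpha$ of a form, integrates to $0$ over the closed manifold $S^{2n-1}_\epsilon$ by Stokes once more (the sphere has no boundary). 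So the cleanest route is: $\int_{S^{2n-1}_\epsilon}\mathcal L_X\alpha = \int_{S^{2n-1}_\epsilon} d(\iota_X\alpha) = 0$ directly, since $S^{2n-1}_\epsilon$ is a closed manifold and $\iota_X\alpha$ is a smooth $(2n-2)$-form on a neighborhood of it — this already kills the integral over \emph{every} sphere, making the $\epsilon\to 0$ argument unnecessary. The only subtlety to verify is that $\iota_X\alpha$ is genuinely smooth on a full neighborhood of $S^{2n-1}_\epsilon$ inside $\DD^n - 0$, which holds because $X$ is a (smooth) vector field and $\alpha$ is holomorphic hence smooth on $\DD^n - 0$.

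I would therefore present the proof in its shortest form: $\mathcal L_X\alpha = d(\iota_X\alpha)$ by Cartan's formula and $d\alpha = 0$, and $\int_{S^{2n-1}} d(\iota_X\alpha) = 0$ by Stokes' theorem since $S^{2n-1}$ is a closed oriented manifold and $\iota_X\alpha$ is a smooth form defined on a neighborhood of it in $\DD^n - 0$.
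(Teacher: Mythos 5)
Your final, short form of the argument is exactly the paper's proof: Cartan's formula together with $d\alpha=\bp\alpha=0$ gives $\mathcal L_X\alpha=d\iota_X\alpha$, and Stokes' theorem on the closed sphere $S^{2n-1}$ kills the integral. The intermediate detour through $\epsilon$-spheres and growth estimates is unnecessary, as you yourself conclude, so the proposal is correct and essentially identical to the paper's argument.
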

\begin{proof}
Since $d\alpha=\bp\alpha=0$, we have $\mathcal L_{X}\alpha=d\iota_X\alpha+\iota_X d\alpha=d\iota_X\alpha$.  So
$$
\int_{S^{2n-1}} \mathcal L_{X}\alpha 
=\int_{S^{2n-1}} d\iota_X\alpha=0.
$$
\end{proof}

\begin{defn}
We define the residue map on the space $\Omega^{0,n-1}(\DD^n-0)$ of holomorphic $(0,n-1)$-forms on $\DD^n-0$
\ali{
\oint: \Omega^{0,n-1}(\DD^n-0) &\lra \C[[\partial_1,\cdots,\partial_n]]\omega \\
  \beta       &\lmt \sum_{i_1,\cdots,i_n {\ge 0}}\LR{\int_{S^{2n-1}} \,dz^1\cdots dz^n\, \frac{(-z_1)^{i_1}}{i_1!}\cdots \frac{(-z_n)^{i_n}}{i_n!} \beta} \partial_{1}^{i_1}\cdots\partial_{n}^{i_n}\omega
}
\end{defn}
Since $\bp\beta=0$, the result is independent of the choice of $S^{2n-1}$.  

\begin{lem}
When restricted to the subspace $\overline{\C[\partial_1,\cdots,\partial_n]\omega} \subset \Omega^{0,n-1}(\DD^n-0)$, the residue $\oint$ coincides with the natural inclusion $\overline{\C[\partial_1,\cdots,\partial_n]\omega} \hookrightarrow  \C[[\partial_1,\cdots,\partial_n]]\omega$. 
\end{lem}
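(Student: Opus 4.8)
The plan is to reduce the statement, by continuity and density, to a single computation on each generator $\partial_1^{j_1}\cdots\partial_n^{j_n}\omega$, and then to evaluate that residue by an integration-by-parts argument resting on Lemma \ref{vanish-lie-derivative}.

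First I would record that $\oint$ is linear and continuous: its $(i_1,\dots,i_n)$-component is the functional $\beta\mapsto\int_{S^{2n-1}}dz_1\cdots dz_n\,\tfrac{(-z_1)^{i_1}}{i_1!}\cdots\tfrac{(-z_n)^{i_n}}{i_n!}\,\beta$, which is continuous on $A^{0,n-1}(\DD^n-0)$ for the $C^\infty$ topology, while the target $\C[[\partial_1,\dots,\partial_n]]\omega=\prod_\alpha\C\cdot\partial^\alpha\omega$ carries the product topology. Since $\{\partial_1^{i_1}\cdots\partial_n^{i_n}\omega\}$ is a Schauder basis of $\overline{\C[\partial_1,\dots,\partial_n]\omega}$ (Proposition \ref{Dolbeault-punctured-disk}), every $\beta$ in this closure is the $C^\infty$-limit of the partial sums of its expansion $\sum_\alpha c_\alpha\partial^\alpha\omega$, and the natural inclusion into $\C[[\partial_1,\dots,\partial_n]]\omega$ sends $\beta$ to the corresponding formal series. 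Consequently it suffices to prove $\oint(\partial_1^{j_1}\cdots\partial_n^{j_n}\omega)=\partial_1^{j_1}\cdots\partial_n^{j_n}\omega$ for every multi-index $j$; linearity and continuity of $\oint$ then carry the identity to all of $\overline{\C[\partial_1,\dots,\partial_n]\omega}$.

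Next, fixing $j$, I would compute the $(i_1,\dots,i_n)$-coefficient of $\oint(\partial^j\omega)$. Because a holomorphic derivative acts as the Lie derivative $\mathcal L_{\partial_k}$ along the constant vector field $\partial_{z_k}$, which annihilates $dz_1\cdots dz_n$, one has $dz_1\cdots dz_n\,\partial_1^{j_1}\cdots\partial_n^{j_n}\omega=\mathcal L_{\partial_1}^{j_1}\cdots\mathcal L_{\partial_n}^{j_n}\omega_{BM}$, so the coefficient equals $\int_{S^{2n-1}}P_i\,\mathcal L_{\partial_1}^{j_1}\cdots\mathcal L_{\partial_n}^{j_n}\omega_{BM}$ with $P_i:=\prod_k\tfrac{(-z_k)^{i_k}}{i_k!}$. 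The key step is the identity, for any polynomial $g$ and any multi-index $j$,
$$
\int_{S^{2n-1}}g\,\mathcal L_{\partial_1}^{j_1}\cdots\mathcal L_{\partial_n}^{j_n}\omega_{BM}=(-1)^{|j|}\int_{S^{2n-1}}(\partial_1^{j_1}\cdots\partial_n^{j_n}g)\,\omega_{BM},
$$
proved by induction on $|j|=j_1+\cdots+j_n$: peeling off one factor $\mathcal L_{\partial_k}$ and using the Leibniz rule $g\,\mathcal L_{\partial_k}\beta=\mathcal L_{\partial_k}(g\beta)-(\partial_k g)\,\beta$, the term $\mathcal L_{\partial_k}(g\beta)$ integrates to zero over $S^{2n-1}$ by Lemma \ref{vanish-lie-derivative}, since $g\beta$ is a $\bp$-closed $(n,n-1)$-form — here one uses that $g$ is holomorphic, that $\omega_{BM}$ is $\bp$-closed on $\DD^n-0$, and that each $\mathcal L_{\partial_\ell}$ commutes with $\bp$ and preserves bidegree, so every $\mathcal L_\partial$-derivative of $\omega_{BM}$ is again a $\bp$-closed $(n,n-1)$-form. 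Specializing $g=P_i$ gives $\partial_1^{j_1}\cdots\partial_n^{j_n}P_i=\prod_k\tfrac{(-1)^{i_k}}{(i_k-j_k)!}z_k^{i_k-j_k}$, which vanishes unless $i_k\ge j_k$ for all $k$; substituting and invoking the first lemma of this subsection (that $\int_{S^{2n-1}}dz_1\cdots dz_n\,z_1^{m_1}\cdots z_n^{m_n}\omega$ equals $1$ when all $m_k=0$ and $0$ otherwise) produces exactly $\delta_{ij}$, hence $\oint(\partial^j\omega)=\partial^j\omega$.

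I expect the main obstacle to be the bookkeeping in the inductive integration by parts — in particular checking at each step that the $(n,n-1)$-form handed to Lemma \ref{vanish-lie-derivative} is honestly $\bp$-closed, and keeping the accumulating sign $(-1)^{|j|}$ and the shifted exponents $i_k-j_k$ straight. The surrounding arguments (continuity of $\oint$, density of the polynomial span, and the final evaluation of the spherical integral) are routine.
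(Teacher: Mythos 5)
Your proposal is correct and follows essentially the same route as the paper: the paper's proof is precisely "take $X=\partial_i$ in Lemma \ref{vanish-lie-derivative} and integrate by parts," which you spell out in detail (the inductive Leibniz-rule peeling, the evaluation $\int_{S^{2n-1}}dz_1\cdots dz_n\,z^{m}\omega=\delta_{m,0}$, and the sign cancellation giving coefficient $\delta_{ij}$), together with the continuity/Schauder-basis step that the paper leaves implicit when passing from the polynomial span to its closure.
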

\begin{proof}
Taking $X=\partial_i$ in Lemma \ref{vanish-lie-derivative}, we see that $\oint \partial_{1}^{i_1}\cdots\partial_{n}^{i_n}\omega = \partial_{1}^{i_1}\cdots\partial_{n}^{i_n}\omega$ by an integration-by-parts argument.
\end{proof}
 
\begin{prop}
$\ker\oint
$ is the space of $\bp$-exact $(0,n-1)$-forms, and $\Im \oint=\overline{\C[\partial_1,\cdots,\partial_n]\omega}$.
\end{prop}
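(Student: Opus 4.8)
The plan is to combine two facts already established: (i) the residue map $\oint$ is a retraction onto $\overline{\C[\partial_1,\cdots,\partial_n]\omega}$ (previous lemma), and (ii) the natural map $\overline{\C[\partial_1,\cdots,\partial_n]\omega} \to H^{0,n-1}_\bp(\DD^n-0)$ is an isomorphism (Proposition \ref{Dolbeault-punctured-disk}). From (i) we get immediately that $\Im\oint \supseteq \overline{\C[\partial_1,\cdots,\partial_n]\omega}$, and since $\oint$ takes values in $\C[[\partial_1,\cdots,\partial_n]]\omega$ with image visibly contained in the closed span (every holomorphic $(0,n-1)$-form on the punctured polydisk has a convergent expansion whose residue-coefficients are controlled by the convergence of $\beta$), equality $\Im\oint = \overline{\C[\partial_1,\cdots,\partial_n]\omega}$ should follow. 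Actually the cleanest route is: $\oint$ restricted to $\overline{\C[\partial_1,\cdots,\partial_n]\omega}$ is the identity, so the image contains $\overline{\C[\partial_1,\cdots,\partial_n]\omega}$; for the reverse inclusion one argues that $\oint\beta$ and $\beta$ represent the same Dolbeault class, and then invokes that $H^{0,n-1}_\bp$ is spanned by the $\partial^I\omega$.

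First I would prove the containment $\ker\oint \supseteq \{\bp\text{-exact }(0,n-1)\text{-forms}\}$. If $\beta = \bp\eta$ for some $\eta\in A^{0,n-2}(\DD^n-0)$, then for each multi-index $I=(i_1,\dots,i_n)$ the coefficient $\int_{S^{2n-1}} dz^1\cdots dz^n\, \tfrac{(-z_1)^{i_1}}{i_1!}\cdots\tfrac{(-z_n)^{i_n}}{i_n!}\,\bp\eta$ can be rewritten, since $dz^1\cdots dz^n\, z^I$ is holomorphic hence $\bp$-closed, as $\pm\int_{S^{2n-1}} \bp\big(dz^1\cdots dz^n\, z^I \eta\big)$, and this vanishes by Stokes ($S^{2n-1}=\partial B$ and $\bp$ of a top-antiholomorphic-degree form equals $d$ of it up to the holomorphic factor, so the integrand is $d$ of something). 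This is essentially the mechanism of Lemma \ref{vanish-lie-derivative}. Care is needed because $\beta$ is only holomorphic, not necessarily extending to $\DD^n$, but the Stokes argument only needs $\eta$ smooth on a neighborhood of $S^{2n-1}$, which is fine.

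Next, the reverse containment $\ker\oint \subseteq \{\bp\text{-exact}\}$: take $\beta$ with $\oint\beta = 0$. By Proposition \ref{Dolbeault-punctured-disk}, $H^{0,n-1}_\bp(\DD^n-0) = \overline{\C[\partial_1,\cdots,\partial_n]\omega}$, so $[\beta]$ is represented by a unique $\beta_0\in\overline{\C[\partial_1,\cdots,\partial_n]\omega}$; write $\beta = \beta_0 + \bp\eta$. Applying $\oint$ and using that $\oint$ kills $\bp$-exact forms (just proved) and is the identity on $\overline{\C[\partial_1,\cdots,\partial_n]\omega}$, we get $0 = \oint\beta = \beta_0$, hence $\beta = \bp\eta$ is exact. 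Combined with the first step, $\ker\oint$ is exactly the space of $\bp$-exact $(0,n-1)$-forms. The same computation gives $\Im\oint = \overline{\C[\partial_1,\cdots,\partial_n]\omega}$: it contains this space since $\oint$ restricts to the identity there, and is contained in it because $\oint\beta = \oint\beta_0 = \beta_0 \in \overline{\C[\partial_1,\cdots,\partial_n]\omega}$ for every $\beta$.

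The main obstacle is the convergence/topology bookkeeping: one must check that $\oint$ genuinely lands in the \emph{closure} $\overline{\C[\partial_1,\cdots,\partial_n]\omega}$ rather than merely in the formal product $\C[[\partial_1,\cdots,\partial_n]]\omega$, and that the identification $H^{0,n-1}_\bp = \overline{\C[\partial_1,\cdots,\partial_n]\omega}$ is compatible with $\oint$ as a continuous map. Since $\oint\beta$ has the same Dolbeault class as $\beta$ (they differ by the $\bp$-exact term $\beta - \oint\beta$, which one checks is exact using step one plus Proposition \ref{Dolbeault-punctured-disk}), and every Dolbeault class lies in $\overline{\C[\partial_1,\cdots,\partial_n]\omega}$, this is automatic once the exactness of $\beta - \oint\beta$ is in hand — so the real content is precisely that $\beta - \oint\beta$ is $\bp$-exact for all holomorphic $\beta$, which is what the argument above delivers.
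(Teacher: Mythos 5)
Your proof is correct and rests on exactly the same three ingredients as the paper's: $\oint$ annihilates $\bp$-exact forms via Stokes on the closed sphere, $\oint$ restricts to the inclusion on $\overline{\C[\partial_1,\cdots,\partial_n]\omega}$, and every class in $H^{0,n-1}_{\bp}(\DD^n-0)$ has a representative in $\overline{\C[\partial_1,\cdots,\partial_n]\omega}$. The paper merely packages the same argument as a commutative diagram with the five lemma, whereas you write it out element-wise via the decomposition $\beta=\beta_0+\bp\eta$; this is not a substantive difference.
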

\begin{proof}
Since $\oint\bp(-)=\oint d(-)=0$, the image of $\bar\partial_{(n-2)}: A^{0,n-2}(\DD^n-0)\to \Omega^{0,n-1}(\DD^n-0)$ lies in $\ker\oint$ 
$$
\begin{tikzcd}
0 \arrow[r] & \Im  \bar\partial_{(n-2)} \arrow[r] \arrow[d, hook] & \Omega^{0,n-1}(\DD^n-0) \arrow[d, phantom, sloped, "=\!=\!\!="]   &   &   \\
0 \arrow[r] & \Ker\oint \arrow[r] & \Omega^{0,n-1}(\DD^n-0) \arrow[r, "\oint"] & \C[[\partial_1,\cdots,\partial_n]]\omega   &  
\end{tikzcd}
$$
By the universal property of cokernels, the diagram extends  to
$$
\begin{tikzcd}
0 \arrow[r] & \Im  \bar\partial_{(n-2)} \arrow[r] \arrow[d] & \Omega^{0,n-1}(\DD^n-0) \arrow[d, phantom, sloped, "=\!=\!\!="] \arrow[r] & H^{0,n-1}_{\bar\partial}(\DD^n-0) \arrow[r] \arrow[d] & 0 \\
0 \arrow[r] & \Ker\oint \arrow[r] & \Omega^{0,n-1}(\DD^n-0) \arrow[r, "\oint"] & \C[[\partial_1,\cdots,\partial_n]]\omega   &  
\end{tikzcd}
$$
Since $H^{0,n-1}_{\bar\partial}(\DD^n-0)$ has a set of representatives $\overline{\C[\partial_1,\cdots,\partial_n]\omega}$, the map $H^{0,n-1}_{\bar\partial}(\DD^n-0) \to \C[[\partial_1,\cdots,\partial_n]]\omega$ is injective with image $\overline{\C[\partial_1,\cdots,\partial_n]\omega}$. 
By the five lemma, the map $\Im \bar\partial_{(n-2)} \to \Ker \oint$ is an isomorphism.
\end{proof}
\begin{cor}\label{residue-convergence}
$\oint \beta\in \C[[\partial_1,\cdots,\partial_n]]\omega$ converges on $\C^n-0$ and  depends only on the cohomology class of $\beta$. 
\end{cor}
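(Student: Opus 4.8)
The plan is to read off Corollary~\ref{residue-convergence} directly from the two diagrams and the five-lemma argument that precede it, rather than to prove anything new. The statement has two parts, and both are already consequences of the facts assembled above; I would spell out the logic explicitly.

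First, for the dependence only on the cohomology class: the lower row of the second commuting diagram factors $\oint$ through the cokernel $\Omega^{0,n-1}(\DD^n-0)/\Im\bar\partial_{(n-2)} = H^{0,n-1}_{\bar\partial}(\DD^n-0)$, because $\Im\bar\partial_{(n-2)}\subseteq\Ker\oint$ was checked (using $\oint\circ\bar\partial = \oint\circ d = 0$, which follows from Stokes as in Lemma~\ref{vanish-lie-derivative}). Hence if $\beta$ and $\beta'$ are $\bar\partial$-cohomologous holomorphic $(0,n-1)$-forms, $\beta-\beta'$ is $\bar\partial$-exact (note: exact by a \emph{smooth} $(0,n-2)$-form, but that is exactly $\Im\bar\partial_{(n-2)}$), so $\oint\beta = \oint\beta'$. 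This is the first clause.

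Second, for the convergence on $\C^n - 0$: by the preceding proposition $\Im\oint = \overline{\C[\partial_1,\cdots,\partial_n]\omega}$, so every value $\oint\beta$ lies in that closure. Now I would invoke the explicit description from the proof of Proposition~\ref{Dolbeault-punctured-disk}: under the Dolbeault isomorphism the element $\omega$ corresponds to the Cauchy kernel $\gamma\in\Hol((\C-0)^n)$, and the isomorphism commutes with the $\partial_i$, so $\overline{\C[\partial_1,\cdots,\partial_n]\omega}$ is identified with $\overline{\C[\partial_1,\cdots,\partial_n]\gamma} = H^{n-1}(\U,\Hol) = \C\{z_1^{-1}\}\otimes\cdots\otimes\C\{z_n^{-1}\}$. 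The point is that, by the claim~(\ref{hol-decomposition-2}) and the Laurent-series argument right after it, the negative-power series in $\C\{z_i^{-1}\}$ converge on all of $\C - 0$ in each variable; equivalently, a convergent infinite linear combination $\sum c_{i_1\cdots i_n}\partial_1^{i_1}\cdots\partial_n^{i_n}\omega$ in the $C^\infty$ topology has coefficients whose associated power series $\sum c_{i_1\cdots i_n} \bar z_1^{\,\cdot}\cdots$ (equivalently the corresponding Laurent tail of $\gamma$) has infinite radius of convergence. Thus every element of $\overline{\C[\partial_1,\cdots,\partial_n]\omega}$, and in particular $\oint\beta$, extends to a closed form on $\C^n-0$; this is what ``converges on $\C^n - 0$'' means.

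The only mild subtlety — and the place I would be most careful — is to keep straight that $\oint\beta$ a priori lies in the \emph{product} space $\C[[\partial_1,\cdots,\partial_n]]\omega$ (formal series), and that the content of the statement is that it actually lands in the \emph{closure} of the direct sum, i.e.\ in the convergent-series subspace; but this is precisely $\Im\oint = \overline{\C[\partial_1,\cdots,\partial_n]\omega}$ from the proposition, together with the identification of that closure with genuinely convergent series via the Cauchy-kernel picture. No new estimate is required: everything has been done, and the corollary is just the packaging.
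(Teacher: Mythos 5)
Your proposal is correct and follows essentially the same route as the paper, which deduces the corollary directly from the preceding proposition: $\Im\bar\partial_{(n-2)}=\Ker\oint$ gives the dependence only on the cohomology class, and $\Im\oint=\overline{\C[\partial_1,\cdots,\partial_n]\omega}$, a closure taken inside $A^{0,n-1}(\C^n-0)$, gives convergence on $\C^n-0$. Your extra detour through the Cauchy kernel and Laurent tails is a harmless elaboration of why that closure consists of forms defined on all of $\C^n-0$, not a different argument.
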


\subsection{Dolbeault cohomology group of $\Conf_m(\C^n)$}
In this section, we apply the Leray spectral sequence to the map
\ali{
\pi:\qquad \Conf_m(\C^n)         &\lra \Conf_{m-1}(\C^n)\\ 
  (\zz^1,\cdots,\zz^{m-1},\zz^m) &\lmt   (\zz^1,\cdots,\zz^{m-1}) 
}
to compute cohomology of the structure sheaf $\Hol_{\Conf_m(\C^n)}$ of $\Conf_m(\C^n)$. Note that a convex domain in $\C^n$ is Stein (see for example \cite{Stein}); 
therefore, it suffices to use a convex cover. 

Let us analyze the stalks of $\pi$. Let $p=(\zz^1,\cdots,\zz^{m-1})$ be a point in $\Conf_{m-1}(\C^n)$, then $\pi\inv(p)=\C^n-\lr{\zz^1,\cdots,\zz^{m-1}}$, and its higher Dolbeault cohomology $H^{0,\bullet>0}_{\bar\partial}(\pi\inv(p))$ is generated by holomorphic derivatives of $\omega_{im}:=\omega(\zz^i-\zz^m,\bar\zz^i-\bar\zz^m)$, $1\le i\le m-1$. When we consider a convex neighborhood $U$ of $p$ instead of $p$, the inverse image $\pi\inv(U)$ becomes $U\times \C^n$ with $m-1$ hypersurfaces $\lr{\zz^i=\zz^m}$ removed. Next, we show $H^{0,\bullet>0}_{\bar\partial}(\pi\inv(U))$ still arises from the forms $\omega_{im}$. 
For simplicity, assume $U$ is of the form $V_1\times \cdots \times V_{m-1}$, where $V_i$ are convex open sets with pairwise disjoint closures. Let us find an open cover of $\pi\inv(U)$. 
For $1\le i\le m-1$, let $W_i$ be convex open sets in $\C^n$ with disjoint closure, such that $\overline V _i\subset W_i$. Choose a convex open cover $\lr{W_{m},W_{m+1},\cdots}$ of $\C^n-\cup \overline V_i$, so we have a convex open cover $\lr{W_i}_{i\ge 1}$ of $\C^n$. Let $p: \pi\inv(U) \to \C^n$ be the projection $p(\zz^1,\cdots,\zz^m) = \zz^m$ and $X_i=p\inv(W_i)$. $\mathcal X=\lr{X_i}_{i\ge 1}$ is an open cover of $\pi\inv(U)$. For $1\le i\le m-1$, 
$X_{i}=\lr{(\zz^i,\zz^m)|\zz^i\in V_i,\zz^m\in W_i,\zz^m\ne \zz^i}  \times V_1\times\cdots\times \widehat V_i\times\cdots\times V_{m-1}$.  Change coordinates   $(\zz^i,\zz^m)\to (\zz^i,\zz^i-\zz^m)$,   Lemma \ref{open-puncture} says  $H^{0,\bullet_{>0}}_{\bp}(X_i)=\Hol(U) \otimes \overline{\C[\partial_{z^m_1},\cdots,\partial_{z^m_n}]\omega_{im}}$.  
Moreover, $X_{i}$ for $i\ge m$, or any nontrivial intersection of $X_i$, is convex, so its $\bp$-cohomology vanishes.  
Then we can compute the cohomology of 
the \v{C}ech-Dolbeault bicomplex 
$C^{\bullet}(\mathcal X,A^{0,\bullet})$. The spectral sequence  
collapses at page $2$
\begin{center}
\begin{sseqpage}[cohomological Serre grading, no x ticks, no y ticks, x range = {0}{2}, y range = {0}{2},   classes = {draw = none },  yscale = 0.9  
]
\begin{scope}[ background ]
\node at (0.6,-2) {E_1 \text{ Page} \quad H^{\bullet}_\bp C^{\bullet}(\mathcal X,A^{0,\bullet}) };

\node at (0, -1) {0};
\node at (1, -1) {1}; 
\node at (2, -1) {\cdots}; 

\node at (-1, 0) {0}; 
\node at (-1, 1) {\vdots}; 
\node at (-1, 2) {n-1}; 
\end{scope}

\class["\,\#\,"] (0,0) 
\class["\,\#\,"](1,0) 
\class["\,\cdots\,"](2,0)

\class["\,\square\,"](0,2)   

\d["\delta"']1 (0,0)
\d["\delta"']1 (1,0) 
 
\end{sseqpage}  
\qquad\qquad\qquad
\begin{sseqpage}[cohomological Serre grading, no x ticks, no y ticks, x range = {0}{2}, y range = {0}{2},   classes = {draw = none },  yscale = 0.9
]
\begin{scope}[ background ]
\node at (0.65,-2) {E_2 \text{ Page} \quad H^{\bullet}_\delta H^{\bullet}_\bp C^{\bullet}(\mathcal X,A^{0,\bullet}) };

\node at (0, -1) {0};
\node at (1, -1) {1}; 
\node at (2, -1) {\cdots}; 

\node at (-1, 0) {0}; 
\node at (-1, 1) {\vdots}; 
\node at (-1, 2) {n-1}; 
\end{scope}

\class["\,\blacksquare\,"](0,0)  
\class["\,\square \,"] (0,2)  
 
\end{sseqpage}    
\end{center}
Here  $\# \overset{\delta}{\rightarrow} \# \overset{\delta}{\rightarrow}  \cdots $ is the \v Cech complex  
$(C^\bullet(\mathcal X,\Hol),\delta)$ with cohomology $\blacksquare =\Hol(\pi\inv(U))$, and $\square= \Hol(U) \otimes \oplus_{i=1}^{m-1}\overline{\C[\partial_{z^m_1},\cdots,\partial_{z^m_n}]\omega_{im}}.$ 
So 
$$
H^{0,\bullet}_\bp(\pi\inv(U))=
\begin{cases}
\Hol(\pi\inv(U)) & \bullet=0 \\
 \Hol(U) \otimes \oplus_{i=1}^{m-1}\overline{\C[\partial_{z^m_1},\cdots,\partial_{z^m_n}]\omega_{im}} & \bullet=n-1 \\
0 & \text{else}
\end{cases}.
$$
In other words, 
$H^{0,\bullet}_\bp(\pi\inv(U))=\Hol(U) \otimes \LR{\Hol(\C_m)\oplus \oplus_{i=1}^{m-1} \overline{\C[\partial_{z^m_1},\cdots, \partial_{z^m_n}]\omega_{im}}}$, where $\Hol(\C_m)$ denotes the space of holomorphic functions in the variable $\zz^m$. As a result, the push-forward sheaf 
$$
R^\bullet \pi_*\Hol_{\Conf_m(\C^n)}
=\Hol_{\Conf_{m-1}(\C^n)} \otimes \LR{\Hol(\C_m)\oplus \oplus_{i=1}^{m-1} \overline{\C[\partial_{z^m_1},\cdots, \partial_{z^m_n}]\omega_{im}}}
$$
is the tensor product of $\Hol_{\Conf_{m-1}(\C^n)}$ with a constant sheaf, with cohomology 
$$
H^\bullet R^\bullet \pi_*\Hol_{\Conf_m(\C^n)}
=H^{0,\bullet}_\bp(\Conf_{m-1}(\C^n))\otimes \LR{\Hol(\C_m)\oplus \oplus_{i=1}^{m-1} \overline{\C[\partial_{z^m_1},\cdots, \partial_{z^m_n}]\omega_{im}}}.
$$ 
Since the forms along fiber exist globally, the Leray spectral sequence collapses at the differential $d_1$. By induction we have 
\begin{thm}\label{Dolbeault-group}
The Dolbeault cohomology group of $\Conf_m(\C^n)$ is
\ali{
H^{0,\bullet}_\bp(\Conf_m(\C^n))
&=\otimes_{j=1}^n \LR{\Hol(\C_j)\oplus \oplus_{i=1}^{j-1} \overline{\C[\partial_{z^j_1},\cdots, \partial_{z^j_n}]\omega_{ij}}} \\
&=\oplus_{k=0}^{m-1}\oplus_{\substack{1< b_1<\cdots<b_{k}\le m\\ a_1<b_1,\cdots,a_k<b_k}} \LR{\otimes_{j\ne b_i}\Hol(\C_j)} \otimes \LR{\otimes_{i=1}^k\overline{\C[\partial_{z^{b_i}_1},\cdots, \partial_{z^{b_i}_n}]\omega_{a_ib_i}}},
}
where $\Hol(\C_j)$ denotes the space of holomorphic functions in the variable $\zz^j$. In other words, $H^{0,\bullet}_\bp(\Conf_m(\C^n))$ is the subspace of the space of smooth differential forms on $\Conf_m(\C^n)$ generated by a Schauder Basis 
$$\lr{f(\zz^1,\cdots,\widehat {\zz ^{b_1}},\cdots,\widehat {\zz ^{b_k}},\cdots,\zz^m)g_1(\vec \partial_{b_1})\omega_{a_1b_1}\cdots g_k(\vec \partial_{b_k})\omega_{a_kb_k}\,|\,f,g_i \text{ are monomials, } 1< b_1<\cdots<b_{k}\le m,a_i<b_i}.$$
\end{thm}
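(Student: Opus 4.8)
The plan is to induct on $m$, computing $H^{0,\bullet}_\bp(\Conf_m(\C^n))$ from the Leray spectral sequence of the forgetful map $\pi\colon\Conf_m(\C^n)\to\Conf_{m-1}(\C^n)$ and feeding in the case $m-1$. The base case $m=1$ is $\Conf_1(\C^n)=\C^n$, where $H^{0,\bullet}_\bp(\C^n)$ is $\Hol(\C^n)$ in degree $0$ and vanishes otherwise; this matches the formula, since then only the empty index set $k=0$ contributes. I would carry out the argument for $n\ge2$; for $n=1$ the fibres of $\pi$ are open Riemann surfaces, hence Stein, so that $R^{>0}\pi_*\Hol=0$ and the same induction produces the degree-zero statement, the decomposition into $\Hol(\C_j)$'s and $\omega_{ij}$-summands coming out of the Laurent expansion in $\zz^m$ of holomorphic functions on $\pi\inv(U)$.

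For the inductive step the first task is to identify $R^\bullet\pi_*\Hol_{\Conf_m(\C^n)}$, and here I would rely on the local analysis performed just before the statement. Over a basic convex polycylinder $U=V_1\times\cdots\times V_{m-1}\subset\Conf_{m-1}(\C^n)$ the space $\pi\inv(U)$ is $U\times\C^n$ with the $m-1$ hyperplanes $\{\zz^i=\zz^m\}$ deleted; pulling back a convex open cover of the $\zz^m$-factor adapted to the closures $\overline V_i$ along $(\zz^1,\dots,\zz^m)\mapsto\zz^m$ gives an open cover $\mathcal X$ of $\pi\inv(U)$ in which the pieces meeting a single hyperplane become, after the substitution $(\zz^i,\zz^m)\mapsto(\zz^i,\zz^i-\zz^m)$, a product of a punctured polydisk $\DD^n-0$ with polydisks — so Lemma \ref{open-puncture} computes their positive-degree Dolbeault cohomology — while every other piece and every nonempty multiple overlap is convex, hence Dolbeault-acyclic in positive degree. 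The \v Cech--Dolbeault bicomplex of $\mathcal X$ then has a spectral sequence that collapses at the second page, yielding
$$
H^{0,\bullet}_\bp(\pi\inv(U))=\Hol(U)\otimes\Bigl(\Hol(\C_m)\oplus\bigoplus_{i=1}^{m-1}\overline{\C[\partial_{z^m_1},\dots,\partial_{z^m_n}]\,\omega_{im}}\Bigr),
$$
with the degree-$0$ part $\Hol(\pi\inv(U))$ and the degree-$(n-1)$ part the displayed $\omega$-summand. Hence $R^\bullet\pi_*\Hol_{\Conf_m(\C^n)}$ is $\Hol_{\Conf_{m-1}(\C^n)}$ tensored with the constant graded object $\Hol(\C_m)\oplus\bigoplus_i\overline{\C[\vec\partial_m]\omega_{im}}$, concentrated in degrees $0$ and $n-1$. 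In the Leray spectral sequence $E_2^{p,q}=H^p(\Conf_{m-1}(\C^n),R^q\pi_*\Hol)\Rightarrow H^{0,p+q}_\bp(\Conf_m(\C^n))$, nuclearity lets me factor the constant out of the sheaf cohomology, so the $E_2$ page is $H^{0,\bullet}_\bp(\Conf_{m-1}(\C^n))\otimes\bigl(\Hol(\C_m)\oplus\bigoplus_i\overline{\C[\vec\partial_m]\omega_{im}}\bigr)$; since $R^q\pi_*\Hol$ is concentrated in $q\in\{0,n-1\}$, at most one differential can be nonzero, and it vanishes because the fibrewise generators $\omega_{im}$ are restrictions of the globally defined $\bp$-closed forms $\phi_{im}^*\omega$ on $\Conf_m(\C^n)$ (and similarly for their holomorphic derivatives), so every $E_2$ class lifts to a global $\bp$-cocycle. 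The spectral sequence therefore degenerates, and combining with the inductive hypothesis gives $H^{0,\bullet}_\bp(\Conf_m(\C^n))=\bigotimes_{j=1}^m\bigl(\Hol(\C_j)\oplus\bigoplus_{i=1}^{j-1}\overline{\C[\vec\partial_j]\omega_{ij}}\bigr)$.

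It remains to unwind this tensor product. Expanding the product over $j\in\{2,\dots,m\}$ means choosing for each $j$ either the summand $\Hol(\C_j)$ or a summand $\overline{\C[\vec\partial_j]\omega_{ij}}$ for some $i<j$; recording the set $\{b_1<\cdots<b_k\}$ of indices where the second option is taken, together with the chosen $a_\ell<b_\ell$, produces precisely the direct-sum decomposition in the statement, and writing each $\Hol(\C_j)$ and each $\overline{\C[\vec\partial_j]\omega_{ij}}$ in monomials gives the asserted Schauder basis. I expect the real obstacle to be the local computation of $R^\bullet\pi_*$ rather than the spectral sequence formalism: one must build the cover $\mathcal X$ so that each singular locus $\{\zz^i=\zz^m\}$ is isolated into a piece splitting as a punctured polydisk times convex factors while every overlap stays convex, and then verify the collapse at the second page of its \v Cech--Dolbeault spectral sequence; once that local statement is in hand, the degeneration of the Leray spectral sequence (via globally defined fibre forms) and the tensor-product bookkeeping are routine.
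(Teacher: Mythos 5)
Your proposal follows essentially the same route as the paper: the Leray spectral sequence for the forgetful map $\pi\colon\Conf_m(\C^n)\to\Conf_{m-1}(\C^n)$, the identification of $R^\bullet\pi_*\Hol_{\Conf_m(\C^n)}$ over convex products $V_1\times\cdots\times V_{m-1}$ via the same adapted convex cover of the $\zz^m$-factor, the coordinate change reducing to Lemma \ref{open-puncture}, collapse of the \v Cech--Dolbeault spectral sequence at the second page, and degeneration of the Leray spectral sequence because the fibrewise classes $\omega_{im}$ and their holomorphic derivatives extend to global $\bp$-closed forms, followed by induction on $m$. This matches the paper's argument, with your treatment of the base case and of $n=1$ being harmless additions.
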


\subsection{Ring structure}
In this section we describe the ring structure of $H^{0,\bullet}_\bp(\Conf_m(\C^n))$ by generators and relations. 
\begin{prop}[The Arnold relation]\label{Arnold-relation}
$\omega_{12}\omega_{23}+\omega_{23}\omega_{31}+\omega_{31}\omega_{12}$ is $\bp$-exact.
\end{prop}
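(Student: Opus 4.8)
The plan is to show that $\Omega:=\omega_{12}\omega_{23}+\omega_{23}\omega_{31}+\omega_{31}\omega_{12}$ is the zero class in $H^{0,2(n-1)}_{\bp}$. Since each $\omega_{ab}$ is $\bp$-closed and $\bp$ is a graded derivation, $\Omega$ is $\bp$-closed and so defines such a class; and because $\Omega$ involves only $\zz^1,\zz^2,\zz^3$, it is the pullback under the coordinate projection $\Conf_m(\C^n)\to\Conf_3(\C^n)$ of the corresponding form there, so it suffices to treat $m=3$ (for $n=1$ the statement is vacuous since the cohomology vanishes in positive degree, but the computation below still degenerates to the exact identity among the functions $\omega_{ab}$).

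The first step is to locate $[\Omega]$. By Theorem \ref{Dolbeault-group}, $H^{0,2(n-1)}_{\bp}(\Conf_3(\C^n))$ is the closed span of the Schauder basis given by the monomials $f(\zz^1)\,g_1(\vec\partial_2)\omega_{12}\,g_2(\vec\partial_3)\omega_{a3}$ with $a\in\{1,2\}$ and $f,g_i$ monomials. Using $\omega_{ab}=(-1)^n\omega_{ba}$ and the Koszul sign $(-1)^{(n-1)^2}$ for transposing two $(0,n-1)$-forms, the first and third summands of $\Omega$ are, up to sign, the undecorated basis vectors $\omega_{12}\omega_{23}$ and $-\omega_{12}\omega_{13}$. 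So the proposition reduces to the single identity $[\omega_{13}\omega_{23}]=[\omega_{12}\omega_{23}]-[\omega_{12}\omega_{13}]$, i.e. to rewriting in the basis the ``bad'' product $\omega_{13}\omega_{23}$, whose two factors are singular along divisors through the common point $\zz^3$.

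To establish that identity I would use the projection $\pi:\Conf_3(\C^n)\to\Conf_2(\C^n)$ forgetting $\zz^3$ (fiber $\C^n\setminus\{\zz^1,\zz^2\}$) together with a relative version of the residue map $\oint$ of the preceding subsection: a residue $\oint_{\zz^3=\zz^a}$ in the fiber variable $\zz^3$ around each puncture, built from the single-puncture map $\oint$, followed by the base residue along $\zz^1=\zz^2$; these extract the $\overline{\C[\partial]\omega_{12}}\otimes\overline{\C[\partial]\omega_{a3}}$-coordinates and so detect classes in the span above. The crucial input is the vanishing principle already implicit in Lemma \ref{vanish-lie-derivative} and the lemmas following it: a $\bp$-closed $(0,n-1)$-form which is the restriction of a smooth form on the un-punctured polydisk is $\bp$-exact there (as $H^{0,n-1}_{\bp}(\DD^n)=0$ for $n\ge2$), hence has vanishing residue. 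Thus $\oint_{\zz^3=\zz^1}$ annihilates every factor $\omega_{23}=\omega(\zz^2-\zz^3)$ (smooth near $\zz^3=\zz^1$, since $\zz^1\ne\zz^2$) but not $\omega_{13}$, and symmetrically for $\oint_{\zz^3=\zz^2}$. Feeding the three summands of $\Omega$ through $\oint_{\zz^3=\zz^1}$ localizes the problem: the first summand dies (both factors smooth in $\zz^3$ at that puncture), while in the second and third the singular factor is $\omega_{13}$ — the third contributes directly, the second only after expanding $\omega(\zz^2-\zz^3)$ in powers of $\zz^3-\zz^1$ — and after the base residue along $\zz^1=\zz^2$ the balance between the two reduces to the elementary identity $\tfrac1{w_1w_2}-\tfrac1{w_2(w_1+w_2)}-\tfrac1{w_1(w_1+w_2)}=0$, which is exactly the $n=1$ Cauchy-kernel Arnold relation ($\omega=\tfrac1{2\pi i z}$ there) and holds on the nose; the symmetric computation with $\oint_{\zz^3=\zz^2}$ handles the other basis coordinate.

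The step I expect to be the main obstacle is this last computation. The point is that $\omega(\zz^2-\zz^3)$ carries differentials in $\bar\zz^3$ as well as in $\bar\zz^2$, so $\oint_{\zz^3=\zz^1}$ is not a naive iterated one-variable residue: one must expand $\omega(\zz^2-\zz^3)$ as a Taylor series in $\zz^3-\zz^1$, including its anti-holomorphic part, and determine which jets pair nontrivially with $\omega(\zz^3-\zz^1)$ in the sphere integral defining the residue. Moreover the cancellation only emerges after correctly assembling signs from three sources — the $(-1)^n$ of $\omega_{ab}=(-1)^n\omega_{ba}$, the Koszul signs from reordering $(0,n-1)$-forms, and the orientation of $S^{2n-1}\times S^{2n-1}$ — and the case $n=2$ needs a little extra care, since there $2(n-1)=n$ and degree considerations alone no longer force $[\omega_{13}\omega_{23}]$ to lie at Leray filtration level $n-1$, so one works directly with the residues rather than through the spectral sequence of $\pi$.
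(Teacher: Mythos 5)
Your strategy---expand $[\omega_{13}\omega_{23}]$ in the Schauder basis of Theorem \ref{Dolbeault-group} and verify by residues that the only surviving coefficients are the two predicted by the Arnold relation---is not circular (that theorem is proved without the relation), but as written it has a genuine gap: the decisive step is never carried out. Knowing that $[\omega_{13}\omega_{23}]$ equals \emph{some} (a priori infinite) combination of basis vectors $f(\zz^1)\,g_1(\vec \partial_2)\omega_{12}\,g_2(\vec \partial_3)\omega_{a3}$ is far from the proposition; the proposition \emph{is} the assertion that only the two zero-jet coefficients survive, with the right signs. To get that you would need (i) to show that your relative residues $\oint_{\zz^3=\zz^a}$, followed by the residue along $\zz^1=\zz^2$, are well defined on $\bp$-cohomology of $\Conf_3(\C^n)$ --- the paper's $\oint$ is only shown to kill $\bp$-exact forms in the one-puncture, no-parameter situation, and with base variables present the fiber residue of a $\bp$-exact form is only exact on the base, so the composite and a continuity argument must be checked; (ii) to show that this family of functionals separates points of the closed span (injectivity of the jet expansion on the topological completion); and (iii) to actually evaluate the sphere integrals pairing the full Taylor expansion (holomorphic \emph{and} anti-holomorphic) of $\omega_{23}$ at $\zz^3=\zz^1$ against $g(\vec \partial_3)\omega_{13}$ and sum the three terms. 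You yourself flag (iii) as the expected obstacle and only assert that it ``reduces'' to the $n=1$ partial-fraction identity; for $n\ge 2$ that reduction is exactly what is missing, since infinitely many jets could a priori contribute and the anti-holomorphic dependence of $\omega_{23}$ enters the sphere integral. So the proposal is a plan whose central computation is deferred, not a proof.

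For contrast, the paper's proof is a two-line explicit construction that bypasses all of this machinery: introduce a fourth point, let $\pi:\Conf_4(\C^n)\to\Conf_3(\C^n)$ forget it, and compute
$\bp \int_{\zz^4} dz^4_1\cdots dz^4_n\,\omega_{14}\omega_{24}\omega_{34}
=\omega_{12}\omega_{23}+\omega_{23}\omega_{31}+\omega_{31}\omega_{12}$,
the point being that $\bp\omega_{i4}$ is the delta current on the diagonal $\zz^4=\zz^i$, so the fiber integration localizes there and produces the three products. This exhibits an explicit $\bp$-primitive, uses no knowledge of $H^{0,\bullet}_\bp(\Conf_3(\C^n))$ and no residue calculus, and works uniformly in $n$. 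If you want to pursue your route, note that completing step (iii) is essentially as hard as producing a primitive directly, which the fiber-integration trick gives for free; at minimum you should carry out (iii) in full for general $n$ and justify (i) and (ii) before the argument can be considered a proof.
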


\begin{proof}
Let $\pi$ be the projection 
\ali{
\pi:\Conf_4(\C^n) &\lra \Conf_3(\C^n) \\
(\zz^1,\zz^2,\zz^3,\zz^4) & \lmt (\zz^1,\zz^2,\zz^3)
}
and denote the fiber integration  by $\int_{\zz^4}$. Then
\ali{
&\bp \int_{\zz^4}  dz_1^4\cdots dz_n^4 \,  \omega_{14}\omega_{24}\omega_{34}
=\int_{\zz^4} dz_1^4\cdots dz_n^4 \,  (\bp_1+\bp_2+\bp_3) \LR{\omega_{14}\omega_{24}\omega_{34}}  
=\omega_{12}\omega_{23}+\omega_{23}\omega_{31}+\omega_{31}\omega_{12}.
}
\end{proof}

Let  
$
\lR{\Hol(\C^n),\,\, \overline{\C[ \partial_{z^j_1},\cdots, \partial_{z^j_n}]\omega_{ij}}} 
$ 
be the topological ring generated by $\Hol(\C^n)$ and $\overline{\C[ \partial_{z^j_1},\cdots, \partial_{z^j_n}]\omega_{ij}}$ for all $i<j$.  
There is a natural map of graded commutative topological rings  
$$
\lR{\Hol(\C^n),\,\, \overline{\C[ \partial_{z^j_1},\cdots, \partial_{z^j_n}]\omega_{ij}}} 
\lra H^{0,\bullet}_\bp(\Conf_m(\C^n)). 
$$  
By Theorem \ref{Dolbeault-group}, the map is surjective, and  its kernel,  as a topological subgroup, is generated by  holomorphic derivatives of $\omega_{ab}\omega_{bc}+\omega_{bc}\omega_{ca}+\omega_{ca}\omega_{ab}$ 
and $(f(\zz^a)-f(\zz^b))g(\vec \partial_b)\omega_{ab}$. 
Using residues on $\C^n-0$, we see that $(f(\zz^a)-f(\zz^b))g(\vec \partial_b)\omega_{ab}$ is $\bp$-exact. So the kernel contains only $\bp$-exact forms. So we have

\begin{thm}\label{Dol-ring-structure-v2}
The natural map of graded commutative topological  rings  
$$
\lR{\Hol(\C^n),\,\, \overline{\C[ \partial_{z^j_1},\cdots, \partial_{z^j_n}]\omega_{ij}}} 
\lra H^{0,\bullet}_\bp(\Conf_m(\C^n))
$$
is surjective with kernel generated by holomorphic derivatives of $\omega_{ab}\omega_{bc}+\omega_{bc}\omega_{ca}+\omega_{ca}\omega_{ab}$ 
and $(f(\zz^a)-f(\zz^b))g(\vec \partial_b)\omega_{ab}$, where $f,g$ are holomorphic functions.
When modulo kernel, we get an isomorphism of topological rings.

In other words, $H^{0,\bullet}_\bp(\Conf_m(\C^n))$ is generated by holomorphic functions and holomorphic derivatives of Bochner kernels, subject to the following relations
\begin{itemize}
\item $\omega_{ab}=(-1)^n \omega_{ba}$
\item $f(\vec \partial_b)\omega_{ab}  g(\vec \partial_b)\omega_{ab}=0$, for $n\ge2$
\item (holomorphic derivatives of) $\omega_{ab}\omega_{bc}+\omega_{bc}\omega_{ca}+\omega_{ca}\omega_{ab}=0$
\item $(f(\zz^a)-f(\zz^b))g(\vec \partial_b)\omega_{ab}=0$
\end{itemize} 
\end{thm}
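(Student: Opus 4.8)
The plan is to assemble Theorem~\ref{Dol-ring-structure-v2} from the pieces that have already been established, treating it essentially as a bookkeeping statement once Theorem~\ref{Dolbeault-group} and the relations in Propositions~\ref{Arnold-relation} and the residue vanishing are in hand. First I would fix the target: the source ring $R:=\lR{\Hol(\C^n),\,\, \overline{\C[ \partial_{z^j_1},\cdots, \partial_{z^j_n}]\omega_{ij}}}$ is by definition the free graded-commutative topological $\Hol(\C^{nm})$-algebra on the generators $g(\vec\partial_b)\omega_{ab}$ (more precisely, the completed symmetric/exterior algebra on the closure of $\C[\vec\partial_b]\omega_{ab}$), and I would record that it comes with the obvious map $\Phi\colon R\to H^{0,\bullet}_\bp(\Conf_m(\C^n))$ sending a representative cocycle to its Dolbeault class. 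The claim has two halves: (i) $\Phi$ is surjective with the stated kernel $J$, and (ii) $R/J\xrightarrow{\sim}H^{0,\bullet}_\bp(\Conf_m(\C^n))$ as topological rings.

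For surjectivity, Theorem~\ref{Dolbeault-group} gives an explicit Schauder basis for $H^{0,\bullet}_\bp(\Conf_m(\C^n))$ consisting of the monomials $f(\zz^1,\ldots,\widehat{\zz^{b_1}},\ldots,\zz^m)\,g_1(\vec\partial_{b_1})\omega_{a_1b_1}\cdots g_k(\vec\partial_{b_k})\omega_{a_kb_k}$ with $b_1<\cdots<b_k$ and $a_i<b_i$; every such element is visibly in the image of $\Phi$, and since $\Phi$ is continuous and these span a dense subspace, surjectivity (with the quotient topology) follows. For the kernel, I would argue in two steps. Step~A: exhibit enough relations. The anti-symmetry $\omega_{ab}=(-1)^n\omega_{ba}$ holds on the nose from the definition of $\omega$ (it is the pullback of the Bochner form under $\zz^a-\zz^b\mapsto-(\zz^b-\zz^a)$, which is orientation-change by $(-1)^n$); the square relation $f(\vec\partial_b)\omega_{ab}\,g(\vec\partial_b)\omega_{ab}=0$ for $n\ge2$ is identically zero since each is a $(0,n-1)$-form with $n-1\ge1$ pulled back from the same $n$-dimensional $\C^n-0$, so the product is a $(0,2n-2)$-form pulled back from a space of complex dimension $n$, hence zero once $2n-2>n$, i.e. $n\ge2$ — and for $n=2$ one checks $\omega_{ab}\wedge\omega_{ab}=0$ directly since $\omega_{ab}$ is a single $(0,1)$-form; the Arnold relation and its holomorphic derivatives are $\bp$-exact by Proposition~\ref{Arnold-relation} (applied after pulling back along the appropriate $\phi_{ab}$'s, noting holomorphic derivatives commute with $\bp$ and with fiber integration); and $(f(\zz^a)-f(\zz^b))g(\vec\partial_b)\omega_{ab}$ is $\bp$-exact by the residue computation — concretely, $f(\zz^a)-f(\zz^b)$ pulled back to $\C^n-0$ via $\phi_{ab}$ becomes $f(\zz)-f(\zz-\zz^a)\big|$, a holomorphic function vanishing at the relevant locus whose product with $g(\vec\partial)\omega$ has zero residue by Corollary~\ref{residue-convergence}, hence is $\bp$-exact; then pull back. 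So $J\subseteq\Ker\Phi$. Step~B: show $\Ker\Phi\subseteq J$. Here I would use the normal form: modulo $J$, any element of $R$ can be rewritten as a (convergent) combination of the basis monomials above — the anti-symmetry lets one arrange $a_i<b_i$, the Arnold relation lets one break any ``chain'' so that the indices $b_1,\ldots,b_k$ are distinct and increasing, the $(f(\zz^a)-f(\zz^b))$ relation lets one push all holomorphic-function dependence off of $\zz^{b_i}$, and the square relation kills repeated pairs. This is exactly the classical reduction from the Arnold presentation (cf.~\cite{cohomology-conf}), adapted to include the extra ``$f(\zz^a)=f(\zz^b)$ on the diagonal'' relation. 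Since the images of these normal-form monomials form a Schauder basis of the cohomology by Theorem~\ref{Dolbeault-group}, an element of $R$ mapping to zero must already be zero in $R/J$; that is, $\Ker\Phi=J$.

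Finally, for the topological isomorphism $R/J\xrightarrow{\sim}H^{0,\bullet}_\bp(\Conf_m(\C^n))$: $\Phi$ is a continuous surjection of nuclear Fréchet(-type) spaces, so it descends to a continuous bijection $\bar\Phi\colon R/J\to H^{0,\bullet}_\bp(\Conf_m(\C^n))$ (the latter with its quotient topology as in the Notations); continuity of the inverse follows because both sides are identified, compatibly, with the closed span of the same Schauder basis inside the space of smooth forms on $\Conf_m(\C^n)$ — indeed $\bar\Phi$ is by construction the identity on basis elements, and the Schauder-basis coordinates give mutually continuous identifications with the same sequence space. Compatibility with the ring structure is automatic since $\Phi$ is a ring map by construction and $J$ is an ideal. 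I expect the main obstacle to be Step~B, the completeness of the relation set: one must be careful that the reduction to normal form can be carried out \emph{within the completed (topological) ring}, i.e. that infinite $\Hol$-linear combinations of the relations still lie in the closed ideal $J$ and that the rewriting converges in the $C^\infty$ topology — this is where the nuclearity of the spaces and the identification of closures with spaces of convergent power series (from the Notations and Lemma~\ref{open-puncture}) do the real work, and where one must invoke that the basis of Theorem~\ref{Dolbeault-group} is genuinely a Schauder basis rather than merely a spanning set.
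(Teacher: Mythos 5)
Your overall route is the same as the paper's: surjectivity and the completeness of the relation set are read off from the Schauder basis of Theorem \ref{Dolbeault-group}, the Arnold relation comes from Proposition \ref{Arnold-relation}, and the diagonal relation is handled by residues. But there is a genuine gap at exactly that last point, and it propagates into your Step B. The residue of $(f(\zz^a)-f(\zz^b))\,g(\vec \partial_b)\omega_{ab}$ is \emph{not} zero when $g$ is non-constant: writing $\ww=\zz^a-\zz^b$ and taking $f(\zz)=z_1$, $g(\vec\partial_b)=\partial_{z^b_1}$, the form is $-w_1\,\partial_{w_1}\omega(\ww)$, and applying Lemma \ref{vanish-lie-derivative} with $X=\partial_{w_1}$ to $\alpha=dw_1\cdots dw_n\, w_1\omega$ gives $\int_{S^{2n-1}}dw_1\cdots dw_n\, w_1\partial_{w_1}\omega=-\int_{S^{2n-1}}dw_1\cdots dw_n\,\omega=-1$, so $\oint\bigl(-w_1\partial_{w_1}\omega\bigr)=\omega$ (the higher coefficients vanish). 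Hence $(z^a_1-z^b_1)\partial_{z^b_1}\omega_{ab}$ represents the nonzero class $[\omega_{ab}]$ and is not $\bp$-exact; Corollary \ref{residue-convergence} does not give what you claim (it only says the residue converges and is cohomology-invariant), and the vanishing of the holomorphic prefactor on the diagonal kills the residue only when no derivatives are applied. Consequently the Step B rewriting, which replaces $f(\zz^{b_i})$ by $f(\zz^{a_i})$ in front of $g(\vec\partial_{b_i})\omega_{a_ib_i}$ with no correction, would identify classes that genuinely differ (by lower-order terms such as $[\omega_{ab}]$ above). The repair is to take as kernel generators the holomorphic derivatives of $(f(\zz^a)-f(\zz^b))\,\omega_{ab}$, parallel to the Arnold bullet: the underived element is exact because its residue visibly vanishes, exactness is preserved under holomorphic Lie derivatives since they commute with $\bp$, and by the Leibniz rule these generators let you move $f(\zz^b)$ past $g(\vec\partial_b)$ at the cost of explicit correction terms, after which the normal-form reduction against the basis of Theorem \ref{Dolbeault-group} closes up. (The paper's own one-line justification is phrased the same way as yours, so the issue is inherited from the text; but since you make the zero-residue claim explicit and build the reduction on it, it is a real gap in your argument as written.)

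A smaller slip: your degree count for the square relation gives $2n-2>n$ only for $n>2$, and your $n=2$ patch covers only $f=g$. For $n=2$ the correct observation is that every holomorphic derivative of $\omega$ is a function multiple of the single antiholomorphic $1$-form $\bar z_1\, d\bar z_2-\bar z_2\, d\bar z_1$ (holomorphic derivatives hit only the coefficient $|\zz|^{-4}$), so any two such forms wedge to zero; with that, $f(\vec\partial_b)\omega_{ab}\, g(\vec\partial_b)\omega_{ab}=0$ holds identically as forms for all $n\ge2$, which is what the presentation needs.
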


For $n=1$,  
$\omega_{ab}=\frac{1}{z^a-z^b}$ has bidegree $(0,0)$, so we have   
\begin{cor}\label{Dol-ring-structure-C}
$H^{0,\bullet}_\bp (\Conf_m(\C)) 
=H^{0,0}_\bp (\Conf_m(\C)) 
= \Hol(\Conf_m(\C))$. 
\end{cor}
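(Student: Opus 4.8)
The plan is to read this off Theorem~\ref{Dolbeault-group}, the only wrinkle being a degree collapse that is special to $n=1$. First I would record that for $n=1$ the Bochner--Martinelli kernel is simply the Cauchy kernel, $\omega_{BM}(z,\bar z)=\frac{1}{2\pi i}\frac{dz}{z}$, so in the factorization $\omega_{BM}=dz\cdot\omega$ one has $\omega(z,\bar z)=\frac{1}{2\pi i z}$, which has bidegree $(0,0)$. Consequently each class $\omega_{ij}=\phi_{ij}^*\omega=\frac{1}{2\pi i(z^i-z^j)}$, together with all of its holomorphic derivatives, is an honest holomorphic function on $\Conf_m(\C)$ rather than a positive-degree form; equivalently, the ``$\bullet=n-1$'' stratum appearing in Proposition~\ref{Dolbeault-punctured-disk} and Theorem~\ref{Dolbeault-group} merges with the ``$\bullet=0$'' stratum.

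Next I would substitute $n=1$ into Theorem~\ref{Dolbeault-group}: every summand of the displayed Schauder basis --- the pure holomorphic piece $\bigotimes_{j}\Hol(\C_j)$ for $k=0$ and every piece built from factors $\overline{\C[\partial_{z^{b_i}_1}]\omega_{a_ib_i}}$ for $k\ge 1$ alike --- contributes total degree $k\cdot(n-1)=0$, so $H^{0,\bullet}_\bp(\Conf_m(\C))=0$ for all $\bullet>0$ and the whole cohomology is concentrated in bidegree $(0,0)$. To identify that bottom piece I would invoke only the tautology valid on any complex manifold $X$, namely $H^{0,0}_\bp(X)=\ker(\bp\colon A^{0,0}(X)\to A^{0,1}(X))=\Hol(X)$, applied to $X=\Conf_m(\C)$. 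Together these give $H^{0,\bullet}_\bp(\Conf_m(\C))=H^{0,0}_\bp(\Conf_m(\C))=\Hol(\Conf_m(\C))$, which is the claim.

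I do not expect a genuine obstacle. As an independent sanity check that bypasses Theorem~\ref{Dolbeault-group} altogether, one may observe that $\Conf_m(\C)$ is the complement in $\C^m$ of the hyperplane arrangement $\bigcup_{i<j}\{z^i=z^j\}$, hence an affine algebraic variety and in particular a Stein manifold, so Cartan's Theorem~B gives $H^q(\Conf_m(\C),\HolX{\Conf_m(\C)})=0$ for $q>0$, i.e. $H^{0,q}_\bp(\Conf_m(\C))=0$ for $q>0$, matching the computation above. The one point that deserves attention is the degree bookkeeping at $n=1$: Theorem~\ref{Dolbeault-group} is phrased with the strata $\bullet=0$ and $\bullet=n-1$ implicitly disjoint, so one should confirm that when they coincide nothing is double-counted. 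This is immediate, since $\Hol(\C_j)\oplus\bigoplus_{i<j}\overline{\C[\partial_{z^j}]\omega_{ij}}$ is precisely the Laurent (partial-fraction) decomposition of the space of holomorphic functions on $\C$ punctured at $z^1,\dots,z^{j-1}$, and tensoring these over $j$ reconstructs $\Hol(\Conf_m(\C))$.
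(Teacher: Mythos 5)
Your argument is essentially the paper's own: the paper deduces the corollary from the observation that for $n=1$ the class $\omega_{ab}=\frac{1}{z^a-z^b}$ has bidegree $(0,0)$, so the description of $H^{0,\bullet}_\bp(\Conf_m(\C^n))$ collapses entirely into degree zero, which is exactly your degree bookkeeping applied to Theorem~\ref{Dolbeault-group}. Your additional Stein/Cartan~B sanity check and the partial-fraction identification of the $k\ge 1$ strata with $\Hol(\Conf_m(\C))$ are correct but supplementary; the core step coincides with the paper's.
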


Since the holomorphic cotangent bundle of $\Conf_m(\C^n)$ is trivial, we have
\begin{cor}
There is  an isomorphism  of graded commutative rings
$$
H^{\bullet,\bullet}_{\bp}(\Conf_m(\C^n)) 
\simeq \C[dz^1_1,\cdots,dz^1_n,\cdots,dz^m_1,\cdots,dz^m_n] \otimes H^{0,\bullet}_{\bp}(\Conf_m(\C^n)).
$$ 
\end{cor}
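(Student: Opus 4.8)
The plan is to exploit the global holomorphic coframe of $\Conf_m(\C^n)$. Since $\Conf_m(\C^n)$ is an open subset of $\C^{mn}$, the coordinate differentials $dz^1_1,\dots,dz^m_n$ trivialize the holomorphic cotangent bundle. Write $V$ for the $mn$-dimensional $\C$-vector space they span. Then every smooth $(p,q)$-form has a unique expression $\sum_{|I|=p} dz^I\wedge\alpha_I$ with $\alpha_I\in A^{0,q}(\Conf_m(\C^n))$ and $dz^I$ ranging over the standard basis of $\Lambda^p V$, so that
$$
A^{p,q}(\Conf_m(\C^n))\;\simeq\;\Lambda^p V\otimes A^{0,q}(\Conf_m(\C^n))
$$
naturally as topological vector spaces (the tensor product is over $\C$, and since $\Lambda^p V$ is finite-dimensional it is the ordinary one). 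Because $\bp\, dz^i_j=0$, under this identification the Dolbeault differential becomes $\Id\otimes\bp$; that is, the full Dolbeault bicomplex of $\Conf_m(\C^n)$ is $\Lambda^\bullet V\otimes\bigl(A^{0,\bullet}(\Conf_m(\C^n)),\bp\bigr)$, with no differential on the $\Lambda^\bullet V$ factor.

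Next I would take cohomology. Tensoring a cochain complex with a fixed finite-dimensional vector space commutes with passing to cohomology, and with the quotient ($C^\infty$) topologies, so
$$
H^{p,q}_\bp(\Conf_m(\C^n))\;\simeq\;\Lambda^p V\otimes H^{0,q}_\bp(\Conf_m(\C^n)).
$$
Summing over $p$ and $q$ and writing $\C[dz^1_1,\dots,dz^m_n]:=\Lambda^\bullet V$ (the free graded-commutative $\C$-algebra on the odd generators $dz^i_j$, as in the statement) yields the asserted isomorphism of bigraded topological vector spaces.

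It remains to see this is a ring isomorphism. Under the splitting above, the wedge product on $A^{\bullet,\bullet}$ is transported to the graded-tensor-product multiplication: for $dz^I\wedge\alpha$ of bidegree $(p,q)$ and $dz^J\wedge\beta$ of bidegree $(p',q')$,
$$
(dz^I\wedge\alpha)\wedge(dz^J\wedge\beta)=(-1)^{qp'}\,(dz^I\wedge dz^J)\wedge(\alpha\wedge\beta),
$$
which is precisely the Koszul sign in the product on $\Lambda^\bullet V\otimes A^{0,\bullet}(\Conf_m(\C^n))$. Passing to cohomology, the ring structure on $H^{\bullet,\bullet}_\bp(\Conf_m(\C^n))$ matches that on $\C[dz^i_j]\otimes H^{0,\bullet}_\bp(\Conf_m(\C^n))$.

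I do not anticipate a real obstacle: the argument is the standard tensor-factorization of Dolbeault cohomology for a complex manifold with holomorphically trivial cotangent bundle. The only points deserving a word of care are the topological bookkeeping—harmless, since $\Lambda^\bullet V$ is finite-dimensional, so all tensor products and cohomology computations in sight reduce to the naive ones—and the tracking of Koszul signs in the product, which is routine.
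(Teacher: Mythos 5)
Your proposal is correct and is essentially the paper's argument: the paper derives the corollary in one line from the triviality of the holomorphic cotangent bundle of $\Conf_m(\C^n)$, and your write-up simply makes explicit the resulting splitting of the Dolbeault bicomplex as $\Lambda^\bullet V\otimes\bigl(A^{0,\bullet},\bp\bigr)$ and the compatibility with products. No gaps.
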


\section{A spectral sequence for $\Conf_m(X)$}\label{ss-confX}

Let $X$ be a  complex manifold  with dimension $n\ge2$  and $\Conf_m(X)$ be the configuration space of $m$ pairwise different points in $X$. 
In this section, we study the Leray spectral sequence associated to the inclusion $f:\Conf_m(X)\hookrightarrow X^m$ and the structure sheaf $\HolX{\Conf_m(X)}$ on $\Conf_m(X)$ 
$$
H^i(X^m;R^jf_* \HolX{\Conf_m(X)}) \Longrightarrow H^{i+j} (\Conf_m(X); \HolX{\Conf_m(X)}).
$$  
  
Denote by $\mathcal D_X$ the sheaf of differential operators on $X$,  by $K_X$  the canonical bundle of $X$,  by $\Delta:X\to X^2$ the diagonal embedding, and by $p_{ij}:\Conf_m(X)\to \Conf_2(X)$  the $(i,j)$-th projection. Denote the $(i,j)$-th diagonal in $X^m$ by $\Delta_{ij}=p_{ij}^{-1}\Delta(X)$. 
Let $f_2:\Conf_2(X)\hookrightarrow X^2$ be the inclusion and denote $G=R^{n-1}{f_2}_* \HolX{\Conf_2(X)}$. The sheaf $G$ is supported on $\Delta(X)$, and its space of local sections is $\overline{\C[\partial_{z^2_1},\cdots, \partial_{z^2_n}]\omega_{12}}$. 
Recall $\omega_{BM}=dz_1\cdots dz_n \omega$. By Proposition \ref{BM-coor-change}, $\bp$-cohomology classes of $\omega_{BM}$ in different coordinates coincide, so $\omega_{BM}(\zz^1-\zz^2,\bar\zz^1-\bar\zz^2)$ spans a constant sheaf supported on $\Delta(X)$, which is isomorphic to $\Delta_*\C$. Consequentially, the $\bp$-cohomology class of $\omega_{12}$ is a local section of  $\Delta_*\Gamma(X,K_X\inv)$, which can be found globally if $X$ admits a holomorphic volume form. 
In local coordinates on $X$, the sheaf of higher direct images $R^jf_* \HolX{\Conf_m(X)}$ is identified as that on $\Conf_m(\C^n)$, which is by Theorem  \ref{Dolbeault-group} generated by $\HolX{X^m}$ and the $G_{ij}:=p_{ij}^*G$.  
 
By Theorem \ref{Dolbeault-group},  as a sheaf of topological groups,
$$
R^\bullet f_* \HolX{\Conf_m(X)}
=\oplus_{k=0}^{m-1}\oplus_{\substack{1< b_1<\cdots<b_{k}\le m\\ a_1<b_1,\cdots,a_k<b_k}} 
G_{a_1b_1} \otimes_{\HolX{X^m}} \cdots \otimes_{\HolX{X^m}} G_{a_kb_k}
$$
where $G_{a_1b_1} \otimes_{\HolX{X^m}} \cdots \otimes_{\HolX{X^m}} G_{a_kb_k}$ 
is supported on the diagonal $\cap_i\Delta_{a_ib_i}$. 
So as a topological group, 
$$
H^\bullet \LR{X^m, R^\bullet f_* \HolX{\Conf_m(X)}}
=H^\bullet \LR{X^m,
\oplus_{k=0}^{m-1}\oplus_{\substack{1< b_1<\cdots<b_{k}\le m\\ a_1<b_1,\cdots,a_k<b_k}} 
G_{a_1b_1} \otimes_{\HolX{X^m}} \cdots \otimes_{\HolX{X^m}} G_{a_kb_k}}.
$$
The product structure  on $H^\bullet (X^m,R^\bullet f_* \HolX{\Conf_m(X)})$ is induced by the product on $R^\bullet f_* \HolX{\Conf_m(X)}$ 
and the cup product. 
Let us analyze the differential $d_n$. 
Since $d_n$ is compatible with the ring structure, 
it is enough to determine by the image of the diagonal classes in $H^i(X^m;R^{n-1}f_* \HolX{\Conf_m(X)})$. 
This is quite similar to the Gysin map for sphere bundles, but here the fiber is $\C^n-0$ rather than a sphere, and the fiber integration is replaced by the $\bp$-integration \cite{Suwa-Cech}. 
The locally defined Bochner kernels in $H^{n,n-1}_\bp(U)$ need not glue to a globally defined cohomology class. The Gysin map sends them to the top Atiyah class \cite{Suwa-Cech} of the normal bundle, which is of type $(n,n)$ and measures the failure of gluing. $d_n$ is the Gysin map twisted by inverse of the canonical bundle of the diagonal.

Finally, we note that 
we can also consider $(\bullet,\bullet)$-forms using 
\ali{
R^\bullet f_*\Omega^{\bullet,0}_{\Conf_m(X)}
\cong R^\bullet f_*f^*\Omega^{\bullet,0}_{X^m}
\cong \Omega^{\bullet,0}_{X^m} \otimes_{\HolX{X^m}} R^\bullet f_*\HolX{\Conf_m(X)}.
}

\begin{thm}\label{conf-mfd-cohomology}
Let $X$ be a complex manifold with dimension $n\ge2$. The inclusion $\Conf_m(X)\hookrightarrow X^m$ determines a Leray spectral sequence which converges to $H^{\bullet,\bullet}_\bp(\Conf_m(X))$ as a graded commutative topological ring. The $E_2$ term is a sheaf of rings with underlying group
$$
H^\bullet \LR{X^m, 
\oplus_{k=0}^{m-1}\oplus_{\substack{1< b_1<\cdots<b_{k}\le m\\ a_1<b_1,\cdots,a_k<b_k}} 
\Omega^{\bullet,0}_{X^m} \otimes_{\HolX{X^m}} G_{a_1b_1} \otimes_{\HolX{X^m}} \cdots \otimes_{\HolX{X^m}} G_{a_kb_k}}.
$$   
The differential $d_n$ is given by the Gysin map, as described above. 
\end{thm}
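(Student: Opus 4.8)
The plan is to run the Leray spectral sequence recalled in the Preliminaries for the open inclusion $f:\Conf_m(X)\hookrightarrow X^m$, using the Dolbeault resolution $0\to\HolX{\Conf_m(X)}\to A^{0,\bullet}_{\Conf_m(X)}$ as the acyclic resolution. Concretely, I would cover $X^m$ by a family $\mathcal{U}$ of products $U_1\times\cdots\times U_m$ of charts biholomorphic to convex open subsets of $\C^n$ (convex domains are Stein, and finite products of Stein spaces are Stein); then $f^{-1}\mathcal{U}$ is an open cover of $\Conf_m(X)$ whose members are biholomorphic to the open subsets of $\Conf_m(\C^n)$ analyzed in the proof of Theorem~\ref{Dolbeault-group}. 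The \v{C}ech--Dolbeault bicomplex of $f^{-1}\mathcal{U}$ computes $H^{0,\bullet}_\bp(\Conf_m(X))$, and the spectral sequence of that bicomplex is the Leray one, with $E_2^{p,q}=H^p(X^m,R^qf_*\HolX{\Conf_m(X)})\Longrightarrow H^{0,p+q}_\bp(\Conf_m(X))$. For the full bigraded statement I would instead resolve $\Omega^{\bullet,0}_{\Conf_m(X)}$ by $A^{\bullet,\bullet}_{\Conf_m(X)}$ and use $\Omega^{\bullet,0}_{\Conf_m(X)}\cong f^*\Omega^{\bullet,0}_{X^m}$, which holds because $f$ is an open embedding.

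The first substantive step is to identify the sheaf $R^\bullet f_*\HolX{\Conf_m(X)}$, which is a local computation on $X^m$. Over a chart $U=U_1\times\cdots\times U_m$ the preimage $f^{-1}(U)$ is an open subset of $\Conf_m(\C^n)$ of the type treated in the proof of Theorem~\ref{Dolbeault-group}, so $H^{0,\bullet}_\bp(f^{-1}U)$ is the free $\Hol(U)$-module on the Schauder basis of monomials in the $g_i(\vec\partial_{b_i})\omega_{a_ib_i}$. By Proposition~\ref{BM-coor-change} the $\bp$-cohomology class of $\omega_{BM}$ is independent of the holomorphic coordinates up to $\bp$-exact terms, so these local descriptions are compatible under coordinate change and glue to a sheaf. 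This yields $G=R^{n-1}{f_2}_*\HolX{\Conf_2(X)}$, supported on $\Delta(X)$ with local sections $\overline{\C[\partial_1,\cdots,\partial_n]\omega}$ and globally twisted by $K_X\inv$ as in the discussion preceding the theorem, together with $G_{ij}$, the pullback of $G$ under the $(i,j)$-th projection $X^m\to X^2$, supported on $\Delta_{ij}$. Assembling over the admissible tuples gives
$$
R^\bullet f_* \HolX{\Conf_m(X)} =\oplus_{k=0}^{m-1}\oplus_{\substack{1< b_1<\cdots<b_{k}\le m\\ a_1<b_1,\cdots,a_k<b_k}} G_{a_1b_1} \otimes_{\HolX{X^m}} \cdots \otimes_{\HolX{X^m}} G_{a_kb_k}
$$
as a sheaf of topological rings, the product being that induced by the wedge product of forms and carrying precisely the relations of Theorem~\ref{Dol-ring-structure-v2}. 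Tensoring with $\Omega^{\bullet,0}_{X^m}$ and using the projection formula $R^\bullet f_*(f^*\Omega^{\bullet,0}_{X^m})\cong\Omega^{\bullet,0}_{X^m}\otimes_{\HolX{X^m}}R^\bullet f_*\HolX{\Conf_m(X)}$ gives the asserted $E_2$ page; since $R^qf_*\HolX{\Conf_m(X)}=0$ for $0<q<n-1$, no differential before $d_n$ can be nonzero.

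To identify $d_n$, I would use that it is a derivation for the ring structure on $E_n$ and that $E_n$ is generated by the permanent classes coming from $H^\bullet(X^m,\Omega^{\bullet,0}_{X^m})$ together with the diagonal classes built from the $G_{ij}$; hence it is enough to compute $d_n$ on a single $G_{ij}$-class, that is, to treat the case $m=2$. A section of $E_n^{0,n-1}=H^0(X^2,G)$ is represented in the \v{C}ech--Dolbeault bicomplex by a family of local Bochner kernels on the charts meeting $\Delta(X)$; applying the total differential and chasing the explicit homotopies $\alpha_p$ from the proof of Proposition~\ref{Dolbeault-punctured-disk} produces a \v{C}ech cocycle of type $(n,n)$ supported on $\Delta(X)$ that measures the obstruction to gluing these kernels into a global $\bp$-closed form. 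Matching this cocycle with Suwa's \v{C}ech representative \cite{Suwa-Cech} of the localized top Atiyah class of $N_{\Delta(X)/X^2}$ identifies it, up to the $K_X\inv$-twist, with that Atiyah class; this is by definition the Gysin map. The general $d_n$ on a product of $G_{a_ib_i}$-classes then follows by the Leibniz rule, reproducing the diagonal-class picture of \cite{cohomology-conf-mfd}, and multiplicativity of the Leray spectral sequence together with nuclearity of the spaces involved gives the convergence as graded commutative topological rings.

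I expect the main obstacle to be the precise identification of $d_n$ with the Gysin/Atiyah map: one must carry the explicit homotopies through the \v{C}ech--Dolbeault double complex, track the $(n,n-1)$ versus $(0,n-1)$ normalization that is responsible for the canonical-bundle twist, and check that the resulting cocycle agrees on the nose with Suwa's representative of the localized Atiyah class, signs and orientation conventions included. The topological bookkeeping needed to phrase convergence in the category of topological rings is routine but should be made explicit.
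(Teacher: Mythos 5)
Your proposal follows essentially the same route as the paper: the Leray spectral sequence computed via a \v{C}ech--Dolbeault bicomplex over a Stein (convex-product) cover, local identification of $R^\bullet f_*\HolX{\Conf_m(X)}$ through Theorem \ref{Dolbeault-group} glued by Proposition \ref{BM-coor-change} into the $K_X\inv$-twisted sheaves $G_{ij}$, the projection formula for the $\Omega^{\bullet,0}_{X^m}$-twist, and the identification of $d_n$ on diagonal classes with Suwa's localized top Atiyah class (Gysin map) using multiplicativity. This matches the paper's argument, with your outline if anything spelling out the homotopy chase for $d_n$ in slightly more detail than the text does.
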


\subsection{Example: complex tori}    
Let $\Gamma\subset \C^n$ be a lattice of rank $2n$.  Then $X=\C^n/\Gamma$ is a  complex torus. In this section, we compute $H^{\bullet,\bullet}_\bp(\Conf_m(\C^n/\Gamma))$. The $E_2$ page of the spectral sequence is
$$
\oplus_{k=0}^{m-1}\oplus_{\substack{1< b_1<\cdots<b_{k}\le m\\ a_1<b_1,\cdots,a_k<b_k}} 
H^\bullet \LR{X^m, \Omega^{\bullet,0}_{X^m} \otimes_{\HolX{X^m}} G_{a_1b_1} \otimes_{\HolX{X^m}} \cdots \otimes_{\HolX{X^m}} G_{a_kb_k}}.
$$  
Notice that all these sheaves come from trivial bundles on $X^m$ or diagonals, and the spectral sequence is also valid for $n=1$. 
Let us determine which cohomology class  can be glued together to form a global one. We perform a coordinate change  
$$
(\zz^1,\cdots,\zz^m)\to (\zz^1,\cdots,\widehat {\zz ^{b_1}},\cdots,\widehat {\zz ^{b_k}},\cdots,\zz^m,\zz^{b_1}-\zz^{a_1},\cdots,\zz^{b_k}-\zz^{a_k}),
$$
then each term in $E_2$ page is a sum and each summand is of the form
$$
f_{1}(\partial_{a_1})\omega(\zz^{a_1},\bar\zz^{a_1})\cdots  f_{k}(\partial_{a_k})\omega(\zz^{a_k},\bar\zz^{a_k}) 
$$
up to multiplication by some $dz^i_j,d\bar z^i_j$. Assume that this form represents a global cohomology class. Taking the residue for $\zz^{a_1},\cdots, \widehat{\zz^{a_{l}}},\cdots,\zz^{a_k}$, we see $f_{l}(\partial_{a_l})\omega(\zz^{a_l},\bar\zz^{a_l})$ is a holomorphic form on $\C^n/\Gamma-0$ for every $l$. Using \v Cech cohomology we can find the generator of such forms.  
Let 
$\omega'(\zz,\bar \zz)= 
\partial_{1}\cdots\partial_{n}\omega(\zz,\bar \zz)$ and
$$
\widetilde\omega(\zz,\bar \zz)=
\omega'(\zz,\bar \zz)+\sum_{\substack{\eta\in \Gamma\\\eta\ne 0}} \LR{\omega'(\zz-\eta,\bar\zz-\bar\eta)+\omega'(\eta,\bar\eta)},
$$
then a holomorphic form on $\C^n/\Gamma-0$ is a linear combination of $\widetilde\omega(\zz,\bar \zz)$ 
and its holomorphic derivatives of all orders. 

Define the map $q_{ij}$ by
\ali{
q_{ij}:\Conf_m(\C^n/\Gamma)&\lra \C^n/\Gamma-0 \\
       (\zz^1,\cdots,\zz^m)&\lmt \zz^a-\zz^b
} 
and $\widetilde\omega_{ij}=q_{ij}^*\widetilde\omega$. 
Let 
$$
\lR{\C[dz^i_j,d\bar z^i_j], \, \overline{\C[ \partial_{z^j_1},\cdots, \partial_{z^j_n}]\widetilde\omega_{ij}}} 
$$
be the graded commutative topological ring  generated by  constant forms $1, dz^i_j,d\bar z^i_j$  and holomorphic derivatives of $\widetilde\omega_{ij}$ with constant coefficients. Using residues, we find 
\begin{prop}
For $n\ge1$, 
the natural map of graded commutative topological rings
$$
\lR{\C[dz^i_j,d\bar z^i_j], \, \overline{\C[ \partial_{z^j_1},\cdots, \partial_{z^j_n}]\widetilde\omega_{ij}}} 
\lra H^{\bullet,\bullet}_\bp(\Conf_m(\C^n/\Gamma))
$$
is surjective with kernel generated by holomorphic derivatives of $\widetilde\omega_{ab}\widetilde\omega_{bc}+\widetilde\omega_{bc}\widetilde\omega_{ca}+\widetilde\omega_{ca}\widetilde\omega_{ab}$ and $(d\bar z^a_i-d\bar z^b_i)\widetilde\omega_{ab}$. When modulo kernel, we get an isomorphism of topological rings. 

In other words, $H^{\bullet,\bullet}_\bp(\Conf_m(\C^n/\Gamma))$ is generated by holomorphic functions and holomorphic derivatives of Bochner kernels, subject to the following relations
\begin{itemize}
\item $\widetilde\omega_{ab}=(-1)^n\widetilde\omega_{ba}$
\item $f(\vec \partial_b)\widetilde\omega_{ab} g(\vec \partial_b)\widetilde\omega_{ab}=0$ for holomorphic functions $f,g$, for $n\ge2$
\item (holomorphic derivatives of) $\widetilde\omega_{ab}\widetilde\omega_{bc}+\widetilde\omega_{bc}\widetilde\omega_{ca}+\widetilde\omega_{ca}\widetilde\omega_{ab}=0$
\item $(d\bar z^a_i-d\bar z^b_i)\widetilde\omega_{ab}=0$. 
\end{itemize} 
\end{prop}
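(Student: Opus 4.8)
The plan is to read everything off the Leray spectral sequence of Theorem~\ref{conf-mfd-cohomology}, applied to the inclusion $f:\Conf_m(\C^n/\Gamma)\hookrightarrow X^m$ with $X=\C^n/\Gamma$ (the construction, and the discussion of $d_n$, being valid for $n=1$ as well). First I would compute the $E_2$ page. In each summand the sheaf $G_{a_1b_1}\otimes_{\HolX{X^m}}\cdots\otimes_{\HolX{X^m}}G_{a_kb_k}$ is supported on $\cap_i\Delta_{a_ib_i}\cong X^{m-k}$, and along that diagonal $\Omega^{\bullet,0}_{X^m}$ is a trivial bundle, $K_{\Delta}\inv$ is trivial, and each $G_{a_ib_i}$ restricts to the constant-coefficient family $\overline{\C[\partial]\omega}$; since $X$ is a torus, $H^\bullet\!\left(X^{m-k},\HolX{}\right)=H^{0,\bullet}_\bp(X^{m-k})$ is the exterior algebra generated by the antiholomorphic differentials of the $m-k$ unconstrained points. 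Thus every $E_2$-class is represented, in local coordinates on $X^m$, by a constant-coefficient form in the $dz$'s and $d\bar z$'s times a product $g_1(\vec\partial_{b_1})\omega_{a_1b_1}\cdots g_k(\vec\partial_{b_k})\omega_{a_kb_k}$ of local Bochner kernels.

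The heart of the argument is that the spectral sequence degenerates at $E_2$, equivalently that these local products glue to global classes. Conceptually, the differential $d_n$ is the Gysin map sending a local Bochner class to the top Atiyah class of the normal bundle of the diagonal, twisted by $K_{\Delta_{ij}}\inv$; for $X$ a torus both $TX$ and $K_X$ are holomorphically trivial, the normal bundle of $\Delta_{ij}\subset X^m$ is a pullback of $TX$, so its top Atiyah class vanishes and $d_n=0$. Concretely---and this is how I would actually prove it---I would construct the global representatives by periodization, generalizing the Weierstrass $\wp$-function. The naive sum $\sum_{\eta\in\Gamma}\omega(\zz-\eta,\bar\zz-\bar\eta)$ diverges, but the extra derivatives in $\omega'=\partial_1\cdots\partial_n\omega$ speed up the decay enough that, after inserting the constant correction terms $\omega'(\eta,\bar\eta)$ that enforce $\Gamma$-periodicity, the series $\widetilde\omega=\omega'+\sum_{\eta\neq0}(\omega'(\zz-\eta,\bar\zz-\bar\eta)+\omega'(\eta,\bar\eta))$ converges in $C^\infty_{\mathrm{loc}}$ on $\C^n-\Gamma$ and descends to a $\bp$-closed holomorphic $(0,n-1)$-form on $X-0$ agreeing with $\omega'$ near $0$; by Proposition~\ref{BM-coor-change} its class is chart-independent, so it is exactly the generator of the fiber cohomology. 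Pulling back under the maps $q_{ij}$ and applying holomorphic derivatives produces the $\widetilde\omega_{ij}$, and the monomials $f\cdot g_1(\vec\partial_{b_1})\widetilde\omega_{a_1b_1}\cdots g_k(\vec\partial_{b_k})\widetilde\omega_{a_kb_k}$ (with $f$ a constant form) then exhaust the $E_2=E_\infty$ page; this gives surjectivity of the map in the proposition and, as in Theorem~\ref{Dolbeault-group}, a Schauder basis of $H^{\bullet,\bullet}_\bp(\Conf_m(X))$ by such monomials modulo the antisymmetry $\widetilde\omega_{ab}=(-1)^n\widetilde\omega_{ba}$.

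It remains to pin down the kernel. The Arnold relation is proved verbatim as in Proposition~\ref{Arnold-relation}: integrating $dz^4_1\cdots dz^4_n\,\widetilde\omega_{14}\widetilde\omega_{24}\widetilde\omega_{34}$ along the $\zz^4$-fiber of $\Conf_4(X)\to\Conf_3(X)$ and noting that periodization commutes with both $\bp$ and fiber integration exhibits the (holomorphic derivatives of the) cyclic sum as $\bp$-exact. The relation $f(\vec\partial_b)\widetilde\omega_{ab}\,g(\vec\partial_b)\widetilde\omega_{ab}=0$ for $n\geq2$ holds for the same bidegree reason as $\omega_{ab}^2=0$ in Theorem~\ref{Dol-ring-structure-v2}. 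The new relation $(d\bar z^a_i-d\bar z^b_i)\widetilde\omega_{ab}=0$ (enough on the degree-one generators $d\bar z_i$ of $H^{0,\bullet}_\bp(X)$) is the Dolbeault analogue of Totaro's identity $p_a^*\tau\cdot\theta_{ab}=p_b^*\tau\cdot\theta_{ab}$: since $G_{ab}$ is supported on $\Delta_{ab}=\{\zz^a=\zz^b\}$, the pullbacks along $p_a$ and $p_b$ of $d\bar z_i\in H^{0,1}_\bp(X)$ act identically on it, which in the \v{C}ech--Dolbeault model amounts to $(d\bar z^a_i-d\bar z^b_i)\omega_{ab}=\bp\big((\bar z^a_i-\bar z^b_i)\omega_{ab}\big)$ chartwise, the chartwise primitives gluing because their $\bp$ is global. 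Conversely, to see that these three families together with the antisymmetry generate the full kernel, I would match the presented ring monomial by monomial against the $E_\infty$ basis from the previous paragraph, using residue maps taken over small spheres linking each diagonal $\{\zz^{a_l}=\zz^{b_l}\}$ (the analogue on $X-0$ of the residue $\oint$ of the earlier Residues subsection, convergent by Corollary~\ref{residue-convergence}) to recognize that a form is $\bp$-exact precisely when it lies in the ideal generated by the listed relations; the same residues show the inverse map is continuous, so the quotient is an isomorphism of topological rings.

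The step I expect to be the main obstacle is the analytic core of the second paragraph: proving the $C^\infty_{\mathrm{loc}}$ convergence of the periodized series and verifying that the correction terms really produce a $\Gamma$-periodic, $\bp$-closed form with the correct fiber class---i.e. the honest decay estimate for $\omega'(\zz-\eta,\bar\zz-\bar\eta)+\omega'(\eta,\bar\eta)$ as $|\eta|\to\infty$, where the borderline rate (a rank-$2n$ lattice against a degree-$2n$ kernel) makes the convergence delicate, exactly as in the classical $\wp$-function case. The remaining difficulty is organizational: arranging the $E_\infty$ basis and the diagonal-linking residues so that completeness of the relation set---and hence the ring isomorphism---is transparent.
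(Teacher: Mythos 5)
Your proposal's central step is wrong, and it is exactly the step that carries the content of the paper's argument. You claim that the spectral sequence degenerates at $E_2$ because the normal bundle of each diagonal is a pullback of the (trivial) bundle $TX$, hence has vanishing top Atiyah class, hence $d_n=0$ and ``all local products glue to global classes.'' This cannot be right: if every fiber class glued, then $\omega_{ab}$ itself (with no derivatives) would extend to a global class on $\Conf_2(\C^n/\Gamma)$, which for $n=1$ is the classical impossibility of an elliptic function with a single simple pole, and for general $n$ is obstructed because the residue pairing of $[\omega]$ with the global holomorphic volume form $dz_1\cdots dz_n$ is $1\neq 0$: the relevant obstruction is not the Atiyah class of the abstract normal bundle but the localized class of the diagonal in $H^{0,n}$ (twisted by $K\inv$), which is nonzero on a torus precisely because $K_X$ has a nowhere-vanishing section. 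Your own periodization formula betrays the problem: you periodize $\omega'=\partial_1\cdots\partial_n\omega$, not $\omega$, so your construction never produces the classes that your degeneration claim asserts must survive. Consequently your surjectivity argument (``the monomials in $\widetilde\omega_{ij}$ exhaust $E_2=E_\infty$'') fails twice over: $E_2$ contains all local classes $f(\vec\partial)\omega_{ab}$ with arbitrary $f$, and the passage to $E_\infty$ is a genuinely nontrivial selection. The paper does the opposite of degeneration: it takes residues in all but one variable to reduce the gluing question to deciding which local classes $f(\vec\partial)[\omega]$ are realized by global $\bp$-closed $(0,n-1)$-forms on the punctured torus $\C^n/\Gamma-0$, and answers that by a \v Cech computation whose output is the periodized kernel $\widetilde\omega$ and its holomorphic derivatives; that analysis is where the factor $\partial_1\cdots\partial_n$ comes from, and it is skipped entirely if one assumes $d_n=0$.

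The remaining parts of your plan (the Weierstrass-type convergence estimate for $\widetilde\omega$, the Arnold relation via fiber integration as in Proposition \ref{Arnold-relation}, the exactness of $(d\bar z^a_i-d\bar z^b_i)\widetilde\omega_{ab}$, and residues over spheres linking the diagonals to control the kernel) are in the spirit of the paper, though two details need repair: the chartwise primitive $(\bar z^a_i-\bar z^b_i)\omega_{ab}$ is not $\Gamma$-invariant, so ``the chartwise primitives glue because their $\bp$ is global'' is not an argument (one should instead show the class $d\bar w_i\,\widetilde\omega$ dies on the punctured torus, e.g. because $H^{0,n}_\bp(\C^n/\Gamma-0)=0$); and your identification of the kernel presupposes the (incorrect) $E_\infty$ description, so it would have to be redone after the gluing analysis. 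To salvage the proof you must replace the degeneration claim by the paper's residue/gluing step determining exactly which fiber classes extend across the compact directions.
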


\begin{bibdiv}
\begin{biblist}

\bibitem{cohomology-conf} B. Knudsen, {\em Configuration spaces in algebraic topology},  arXiv:1803.11165 (2018).

\bibitem{cohomology-conf-mfd} B. Totaro, {\em Configuration spaces of algebraic varieties},   Topology 35.4 (1996), pp. 1057–1067.


\bibitem{Stein} H. Grauert, R. Remmert, {\em Theory of Stein Spaces}, Classics in Mathematics, Springer, 2004.

\bibitem{kevin-owen}
K.~Costello and O.~Gwilliam.
\newblock {\em Factorization algebras in quantum field theory. {V}ol. 1,2},
  volume~31 of {\em New Mathematical Monographs}.
\newblock Cambridge University Press, Cambridge, 2017.




\bibitem{Griffiths-Harris} P. Griffiths, J. Phillip, {\em Principles of Algebraic Geometry}, Hoboken, N.J. Wiley, 1994.

\bibitem{Bott-Tu} R. Bott, L. Tu,   {\em Differential forms in algebraic topology}, Graduate Texts in Mathematics. Vol. 82. New York-Berlin: Springer-Verlag, 1982.

\bibitem{Suwa-Cech} T. Suwa, {\em Čech-Dolbeault cohomology and the $\bp$-Thom class}, Singularities—Niigata—Toyama 2007, 321–340, Adv. Stud. Pure Math., 56, Math. Soc. Japan, Tokyo, 2009.


\end{biblist}
\end{bibdiv}

\end{document}